\theoremstyle{plain}
\newtheorem{theorem}{Theorem}\newtheorem{lemma}[theorem]{Lemma}
\newtheorem{proposition}[theorem]{Proposition}
\theoremstyle{definition}
\newtheorem{remark}[theorem]{Remark}
\numberwithin{equation}{section}
\DeclareMathOperator{\Link}{Link}
\begin{document}

\title[Boundary rigidity of finite CAT(0) cube complexes]{Boundary rigidity of CAT(0) cube complexes} 

\author[J.\ Chalopin]{J\' er\'emie Chalopin}
\address{CNRS and Aix-Marseille Universit\'e, LIS, Marseille, France}
\email{jeremie.chalopin@lis-lab.fr}

\author[V.\ Chepoi]{Victor Chepoi}
\address{Aix-Marseille Universit\'e and CNRS, LIS, Marseille, France}
\email{victor.chepoi@lis-lab.fr}

\begin{abstract} In this note, we prove that finite CAT(0) cube complexes can be reconstructed from their boundary distances (computed in their 1-skeleta).
This result was conjectured by Haslegrave, Scott, Tamitegama, and Tan (2023). The reconstruction of a finite cell complex from the boundary distances is the discrete version of the boundary rigidity problem, which is a classical
problem from Riemannian geometry.  In the proof, we use the bijection between CAT(0) cube complexes and median graphs, and corner peelings of median graphs. \end{abstract}

\maketitle

\section{Introduction}
A natural question, arising in several research areas, is whether the
internal structure of an object can be determined from distances
between its boundary points. By a classical result in phylogeny by
Buneman~\cite{Bu} and Zareckii~\cite{Za}, trees can be reconstructed
from the pairwise distances between their leaves. In Riemannian
geometry, the notion of reconstruction from a distance function on the
boundary is well-established and related to boundary rigidity
questions. A Riemannian manifold $(M,g)$ is said to be \emph{boundary
  rigid} if its metric $d_g$ (defined on all pairs of points,
including the interior) is determined up to isometry by its boundary
distance function.  Michel~\cite{Mi} conjectured that any simple
compact Riemannian manifold with boundary is boundary rigid. The case
of 2-dimensional Riemannian manifolds was confirmed by Pestov and
Uhlmann~\cite{PeUh}. In higher dimensions, the conjecture is wide open and 
has been
established only in two cases by Besson, Courtois, and
Gallot~\cite{BeCoGa} and by Burago and Ivanov~\cite{BuIv}. 
The discrete version of
boundary rigidity was suggested by Benjamini (see~\cite{Ha,HaScTaTa})
who asked if any plane triangulation in which all inner vertices have degrees at least 6 can be
reconstructed from the distances between its boundary vertices. This
was answered in the affirmative by Haslegrave~\cite{Ha}, who also
proved a similar result for plane
quadrangulations in which all inner vertices have degrees at least 4. Haslegrave,
Scott, Tamitegama, and Tan~\cite{HaScTaTa} generalized the second
result of~\cite{Ha} and proved that any finite 2-dimensional CAT(0)
cube complex and any finite 3-dimensional CAT(0) cube complex $X$ with
an embedding in ${\mathbb R}^3$  can be
reconstructed from its boundary distances. They conjectured (see~\cite[Conjecture 20]{HaScTaTa}) that all finite CAT(0) cube complexes
can be reconstructed up to isomorphism from their boundary
distances. In the papers~\cite{Ha} and~\cite{HaScTaTa} the boundary $\partial X$
of $X$ is defined combinatorially (independently of the embedding of
$X$ in some ${\mathbb R}^k$), and the input information is the set of
all pairwise distances in the graph (1-skeleton) $G(X)$ of the complex
$X$ between all vertices of $G(X)$ belonging to $\partial X$.  In this
note, we confirm the conjecture of~\cite{HaScTaTa} and prove the following theorem.

\begin{restatable}{theorem}{thmCATBoundaryRigid}\label{thm-CAT0-boundary-rigid}
  Let $X$ be a finite CAT(0) cube complex. Then $X$ is determined up
  to isomorphism by the distances between the vertices of $\partial X$
  in the 1-skeleton of $X$. Consequently, the class of finite CAT(0)
  cube complexes is boundary rigid.
\end{restatable}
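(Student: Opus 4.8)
The plan is to translate the problem into the language of median graphs and to argue by induction, peeling off one boundary layer at a time. By the bijection between finite CAT(0) cube complexes and finite median graphs, it is equivalent to show that a finite median graph $G=G(X)$ is determined, up to isomorphism, by the metric that its path-metric $d_G$ induces on the set $B=\partial X$ of boundary vertices. I will use freely the standard convexity toolkit for median graphs: the decomposition into hyperplanes ($\Theta$-classes) with their halfspaces $H^+,H^-$, the fact that $d_G(u,v)$ equals the number of hyperplanes separating $u$ from $v$, and the gatedness (hence convexity and isometric embedding) of halfspaces. The induction is on $|V(G)|$, with the single vertex as base case.

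First I would isolate the object to be peeled. Call a hyperplane $\mathbf{h}$ \emph{peripheral} if one of its halfspaces, say $H^+$, coincides with its own boundary $\partial H^+=\{x\in H^+: x \text{ has a neighbour in } H^-\}$; equivalently $H^+$ is a single layer, matched across $\mathbf{h}$ to $\partial H^-$. Every finite median graph with an edge has a peripheral hyperplane (for instance one arising from an inclusion-minimal halfspace), and one checks that the thin halfspace $H^+$ is contained in $B$: a vertex whose $\mathbf{h}$-coordinate is extremal cannot have a spherical link, so it lies on $\partial X$. The peeling operation is then the restriction of $G$ to the fat halfspace $H^-$: since $H^-$ is convex, $G[H^-]$ is again a median graph, it has strictly fewer vertices, and being convex it is isometrically embedded, so $d_{G[H^-]}=d_G$ on $H^-$.

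The heart of the induction is to produce the boundary data of $X^-:=G[H^-]$ from that of $X$. The boundary of $X^-$ consists of the surviving old boundary vertices $B\cap H^-$ together with the newly exposed layer $\partial H^-$ (vertices that lose their unique $\mathbf{h}$-edge). For a newly exposed vertex $y\in\partial H^-$ let $y^+\in\partial H^+\subseteq B$ be its mate across $\mathbf{h}$. Because $\mathbf{h}$ is peripheral, $y$ is the gate of $y^+$ in the convex set $H^-$, so $d_G(y,z)=d_G(y^+,z)-1$ for every $z\in H^-$; and for two newly exposed vertices $y,z$ with mates $y^+,z^+$ the hyperplane $\mathbf{h}$ separates neither pair, whence $d_G(y,z)=d_G(y^+,z^+)$. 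Since all mates lie in $H^+\subseteq B$, these identities express every boundary distance of $X^-$ in terms of boundary distances of $X$. By the induction hypothesis $X^-$ is reconstructed; to recover $X$ one re-creates the peeled layer as a disjoint copy of $\partial H^-$ and glues it back by the canonical matching, the inverse of the peeling (a convex expansion in Mulder's sense). The uniqueness of this reattachment, given $X^-$ and the identification of $\partial H^-$, must be verified, but is forced by the median structure.

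The main obstacle is the \emph{detection} step: one must single out a peripheral hyperplane $\mathbf{h}$, its thin halfspace $H^+$, and the mate map $y\mapsto y^+$ using only the distance matrix on $B$. This is delicate because the boundary distance is an $\ell_1$-type (cut) metric, and a cut metric does not by itself determine its cuts; it is precisely the median/CAT(0) structure that must be exploited to break this ambiguity. I expect the detection to proceed by characterising, among boundary vertices, those belonging to a thin halfspace through purely metric conditions — for instance extremality of suitable distance sums together with the rigidity of the relation $d_G(y,z)=d_G(y^+,z^+)$, which should let one recognise the matched pairs $(y,y^+)$ and hence the peripheral $\Theta$-class — and then checking that the reconstructed partition of $B$ is consistent and unique. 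Making this detection rigorous, together with the uniqueness of the reattachment above, is where the real work lies; the remaining inductive bookkeeping is routine.
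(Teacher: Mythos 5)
Your outline takes a genuinely different route from the paper: you peel the complex one hyperplane-layer at a time (a convex contraction, the inverse of Mulder's convex expansion) and induct on $|V(G)|$, whereas the paper peels one vertex at a time via a corner peeling obtained by ordering all vertices by distance to a fixed basepoint $z\in\partial G$. Several of your supporting claims are correct and provable, even where your justification is loose: an inclusion-minimal halfspace $H^+$ does satisfy $H^+=\partial H^+$; every vertex of such a thin halfspace does lie in $\partial X$ (take a cube $C\ni x$ maximal among cubes contained in $H^+$; the only cube having $C$ as a facet is $C\cup C^-$, so $C$ witnesses $x\in\partial X$ --- the ``spherical link'' phrase is not an argument for the combinatorial boundary used here); and the distance-transfer identities $d(y,z)=d(y^+,z^+)$ and $d(y,z)=d(y^+,z)-1$ are correct, as is (though you only assert it) the containment $\partial X^-\subseteq(\partial X\cap H^-)\cup\partial H^-$ needed to feed the induction.

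However, there is a genuine gap, and it sits exactly at the heart of the argument: the \emph{detection} step. You must extract, from the boundary distance matrix alone, a peripheral $\Theta$-class, its thin halfspace $H^+\subseteq\partial X$, and the mate bijection $y\mapsto y^+$; you explicitly defer this (``I expect the detection to proceed\dots where the real work lies''), offering only the hope that some extremality-of-distance-sums condition will work. No such characterization is proposed, let alone proved, and its uniqueness (two different candidate peripheral classes consistent with the same data could a priori yield non-isomorphic reconstructions) is not addressed. This is not routine bookkeeping: as the paper's $Q_3^-$ example shows, for general cube complexes the boundary metric genuinely fails to determine the cut structure, so the median property must enter precisely here. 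The paper resolves the analogous difficulty with a much more local mechanism: Proposition~\ref{corner-peeling-median} shows that \emph{any} vertex farthest from \emph{any} basepoint is a corner; the quadrangle condition then reconstructs the unique downward cube $C_i$ from the already-known neighbours of that corner; Lemmas~\ref{boundary_corner} and~\ref{boundary_corner2} confine the newly exposed boundary to the single opposite vertex $u_i$; and Lemma~\ref{lem-distui} computes its distances by a $\max$ formula. Until you supply a metric criterion that provably identifies a peripheral hyperplane and its matching from boundary distances (or replace your layer-peeling by some such local step), the proof is incomplete.
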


In the proof, we use the bijection between  CAT(0) cube complexes and median graphs 
and corner peelings of median graphs. CAT(0) cube complexes are central objects in geometric group theory;
see Sageev~\cite{Sa,Sa_survey} and Wise~\cite{Wi}. They have been
characterized by Gromov~\cite{Gr} as simply connected cube complexes
in which the links of vertices are simplicial flag complexes. The
systematic investigation and use of CAT(0) cube complexes in geometric
group theory started with the paper by Sageev~\cite{Sa}. Soon after,
it was proved by Chepoi~\cite{Ch_CAT} and Roller~\cite{Ro} that
1-skeleta of CAT(0) cube complexes are exactly the median
graphs. Median graphs are central objects in metric graph theory; they were introduced by Nebesk{\'y}~\cite{Ne} 
and investigated in numerous papers, see for example
the papers by Mulder~\cite{Mu,Mu_exp} and the survey by Bandelt and Chepoi~\cite{BaCh_survey}.

\section{Preliminaries}
In this section, we recall the boundary rigidity problem for cell complexes. We also recall some definitions and basic facts about CAT(0) cube complexes
and median graphs. 

\subsection{Graphs}
All graphs $G=(V(G),E(G))$ considered in this paper are finite, undirected,
connected, and contain neither multiple edges, nor loops.  For two distinct
vertices $v,w\in V(G)$ we write $v\sim w$ when there is an edge
connecting $v$ with
$w$.  The subgraph
of $G$ \emph{induced by} a subset $A\subseteq V(G)$ is the graph
$G[A]=(A,E')$ such that $uv\in E'$ if and only if $uv\in E(G)$.
A \emph{square} $uvwz$
is an induced $4$--cycle $(u,v,w,z)$. Equivalently, a square is a graph that is isomorphic to the 1-skeleton of a  unit Euclidean square in the plane.  More generally, a \emph{cube} $Q_n$ of dimension $n$ is a graph
isomorphic to the 1-skeleton of the $n$-dimensional unit Euclidean
cube. Alternatively, $Q_n$ is a graph whose vertices
can be labeled by the subsets of a set of size $n$ and such that
two vertices are adjacent if and only if the corresponding sets $A,B$  differ in a single element, i.e., if $|A\triangle B|=1$.
The \emph{distance}
$d(u,v)=d_G(u,v)$ between two vertices $u$ and $v$ of a graph $G$ is the
length of a shortest $(u,v)$--path.  An induced subgraph $H=G[A]$ of a graph $G$ is an \emph{isometric subgraph} of $G$ if $d_H(u,v)=d_G(u,v)$ for any
two vertices $u,v\in A$.
The \emph{interval}
$I(u,v)$ between $u$ and $v$ consists of all vertices on shortest
$(u,v)$--paths, that is, of all vertices (metrically) \emph{between} $u$
and $v$: $I(u,v)=\{ x\in V(G): d(u,x)+d(x,v)=d(u,v)\}$. An induced
subgraph of $G$ is called \emph{convex}
if it includes the interval of $G$ between any pair of its
vertices.
An induced subgraph $H$ (or the corresponding vertex set of $H$)
of a graph $G$
is \emph{gated}  if for every vertex $x$ outside $H$ there
exists a vertex $x'$ in $H$ (the \emph{gate} of $x$)
such that  $x'\in I(x,y)$ for each $y$ of $H$. Gated sets are convex and
the intersection of two gated sets is
gated.

\subsection{Cube complexes and boundary rigidity} All cube complexes considered in this
paper are finite cell complexes whose cells are unit Euclidean cubes and whose gluing maps are isometries (see the book of Hatcher~\cite{Hat}).
The \emph{dimension} $\dim(C)$ of a cell $C$ is the dimension of
the cube $C$. The \emph{dimension} $\dim(X)$ of $X$ is the largest dimension of
a cell of $X$.  If $C$ is a cell of $X$ of dimension $k$, then all
cells of $X$ contained in $C$ and having dimension $k-1$ are called
the \emph{facets} of $C$. Cells of $X$ are called \emph{maximal} if
they are maximal by inclusion, and \emph{non-maximal} otherwise.
For a cube complex $X$, we denote by $X^{(k)}$ the \emph{$k$--skeleton} of $X$ consisting of all cubes of dimension at most $k$.
We use the notations $V(X) = X^{(0)}$ for the set of vertices
(0-cubes) of $X$ and $G(X):=X^{(1)}$ for the 1-skeleton of $X$; the
graph $G(X)$ will be always endowed with the standard graph distance
$d_G$. An \emph{abstract simplicial complex} $\Delta$ on a finite set
$V$ is a set of nonempty subsets of $V$, called \emph{simplices}, such
that any nonempty subset of a simplex is also a simplex.
The \emph{clique complex} of a graph $G$ is the abstract simplicial
complex having the cliques (i.e., complete subgraphs) of $G$ as
simplices. A simplicial complex $X$ is a \emph{flag simplicial
  complex} if $X$ is the clique complex of its $1$--skeleton.  In a
cube complex $X$, the \emph{link} $\Link(x)$ of a vertex $x$ is the
simplicial complex whose vertices are the edges of $X$ containing $x$
and whose simplices are given by the collections of edges belonging to
a common cube of X.

We will use a slightly weaker version of the definition of
combinatorial boundary given by Haslegrave et al.~\cite{HaScTaTa}.
For a finite cube complex $X$, the \emph{combinatorial boundary}
$\partial X$ is the downward closure of all non-maximal cells of $X$
such that each of them is a facet of a unique cell of $X$.  The
definition of~\cite{HaScTaTa} is similar but instead of maximality by
inclusion the maximality by dimension is considered:
in~\cite{HaScTaTa}, the combinatorial boundary of a cube complex $X$
of dimension $k$ is the downward closure of all cells of $X$ of
dimension less than $k$ belonging to at most one cell of dimension
$k$. Obviously, the boundary of a cube complex $X$ defined in our way
is always contained in the boundary of $X$ defined as
in~\cite{HaScTaTa}. The two boundaries are different as soon as the
cube complex $X$ contains maximal cells of different dimensions.

The \emph{boundary distance matrix} of a finite cube complex $X$ is the
matrix whose rows and columns are the vertices of $\partial X$ and
whose entries are the distances in $G(X)$ between the corresponding
vertices of $\partial X$.  A class $\mathfrak{C}$ of finite cube
complexes is called \emph{boundary rigid} if for any two complexes
$X,Y \in \mathfrak{C}$ that have the same boundary distance matrices
up to a permutation $\sigma$ of the rows and the columns ($\sigma$ can
be seen as an isometry between $\partial X$ and $\partial Y$), then
$X$ and $Y$ are isomorphic via an isomorphism $\widetilde{\sigma}$
extending $\sigma$.

\subsection{CAT(0) cube complexes and median graphs}
In this subsection, we recall the definitions of CAT(0) cube complexes and median graphs, and the bijection between them.

Let $(X,d)$ be a metric space.
A \emph{geodesic} joining two
points $x$ and $y$ from $X$ is a map $\rho$ from the segment $[a,b]$
of ${\mathbb R}^1$ of length $|a-b|=d(x,y)$ to $X$ such that
$\rho(a)=x, \rho(b)=y$, and $d(\rho(s),\rho(t))=|s-t|$ for all $s,t\in
[a,b]$. A metric space $(X,d)$ is \emph{geodesic} if every pair of
points in $X$ can be joined by a geodesic.
A \emph{geodesic triangle} $\Delta=\Delta (x_1,x_2,x_3)$ in a geodesic
metric space $(X,d)$ is defined by three points in $X$ (the vertices
of $\Delta$) and three arbitrary geodesics (the sides of $\Delta$),
one between each pair of vertices $x_1, x_2, x_3$. A \emph{comparison
  triangle} for $\Delta (x_1,x_2,x_3)$ is a triangle
$\Delta (x'_1,x'_2,x'_3)$ in the Euclidean plane ${\mathbb E}^2$ such
that $d_{{\mathbb E}^2}(x'_i,x'_j)=d(x_i,x_j)$ for
$i,j\in \{ 1,2,3\}$. A geodesic metric space $(X,d)$ is a \emph{CAT(0)
  space}~\cite{Gr} if all geodesic triangles $\Delta (x_1,x_2,x_3)$ of
$X$ satisfy the comparison axiom of Cartan--Alexandrov--Toponogov:
\emph{If $y$ is a point on the side of $\Delta(x_1,x_2,x_3)$ with
  vertices $x_1$ and $x_2$ and $y'$ is the unique point on the line
  segment $[x'_1,x'_2]$ of the comparison triangle
  $\Delta(x'_1,x'_2,x'_3)$ such that
  $d_{{\mathbb E}^2}(x'_i,y')= d(x_i,y)$ for $i=1,2$, then
  $d(x_3,y)\le d_{{\mathbb E}^2}(x'_3,y')$.} A geodesic metric space
$(X,d)$ is \emph{nonpositively curved} if it is locally CAT(0),
i.e., any point has a neighborhood inside which the CAT(0)
inequality holds.  CAT(0) spaces can be characterized in several
different natural ways and have many strong properties, see for
example the book by Bridson and Haefliger~\cite{BrHa}.  In particular, a geodesic metric space $(X,d)$
is CAT(0) if and only if $(X,d)$ is simply connected and is
nonpositively curved. For CAT(0) cube complexes, Gromov~\cite{Gr} gave
a beautiful combinatorial analog of this result, which can be also
taken as their definition:

\begin{theorem}[\!\!\cite{Gr}]\label{Gromov} A cube complex $X$ endowed  with the $\ell_2$-metric is CAT(0) if and
only if $X$ is simply connected and the links of all vertices of $X$ are flag simplicial complexes.
\end{theorem}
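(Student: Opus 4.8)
The plan is to separate the statement into a global reduction and a purely local, combinatorial core. For the global step I would invoke the Cartan--Hadamard theorem for geodesic metric spaces (see Bridson--Haefliger~\cite{BrHa}): a complete, connected, simply connected, locally CAT(0) space is globally CAT(0). Since a finite cube complex with the $\ell_2$-metric is complete and geodesic, this reduces the ``if'' direction to proving that $X$ is \emph{nonpositively curved}, i.e. locally CAT(0) at every point. For the ``only if'' direction I would use that a CAT(0) space is contractible, hence simply connected, and is trivially locally CAT(0); thus the link condition will follow once the local equivalence below is established. In either direction the theorem therefore reduces to the statement: $X$ is nonpositively curved if and only if $\Link(x)$ is a flag simplicial complex for every vertex $x$.

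Next I would analyze the local geometry. Away from the faces of codimension $\geq 2$, $X$ is locally isometric to Euclidean space, so the only points where nonpositive curvature can fail lie on lower-dimensional faces, and a standard cone argument shows it suffices to check the vertices. A neighborhood of a vertex $x$ is isometric to a neighborhood of the apex in the Euclidean cone $C(\Link(x))$, where $\Link(x)$ carries the \emph{all-right spherical metric}: each cube of $X$ containing $x$ meets the unit sphere around $x$ in a spherical simplex all of whose edges have length $\tfrac{\pi}{2}$ and all of whose dihedral angles are $\tfrac{\pi}{2}$, precisely because the cubes are unit Euclidean cubes glued by isometries. I would then apply Berestovskii's cone criterion (as in~\cite{BrHa}): the Euclidean cone $C(L)$ is CAT(0) if and only if $L$ is CAT(1). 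Consequently $X$ is nonpositively curved at $x$ if and only if $\Link(x)$, with its all-right spherical metric, is a CAT(1) space.

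It remains to prove the combinatorial heart of the theorem, Gromov's lemma: \emph{an all-right spherical complex $L$ is CAT(1) if and only if it is flag.} I would argue by induction on $\dim L$, using two facts. First, the local structure of $L$ at a vertex $v$ is a spherical cone over the link $\Link_L(v)$, which is again an all-right spherical complex of one lower dimension; by the spherical analogue of the cone criterion, $L$ is locally CAT(1) at $v$ if and only if $\Link_L(v)$ is CAT(1), which by the induction hypothesis means $\Link_L(v)$ is flag. Second, a complete geodesic space is CAT(1) if and only if it is locally CAT(1) and contains no closed geodesic of length $<2\pi$. The flag condition is exactly what rules out such short geodesics: if $L$ is not flag it contains an empty simplex, i.e. vertices that are pairwise joined by edges but do not span a simplex; at each such vertex the two edge-directions are non-adjacent in the link and hence at distance $\geq\pi$, so the edges of the empty simplex concatenate into a closed \emph{local} geodesic, and in the minimal (triangle) case this geodesic has length $3\cdot\tfrac{\pi}{2}=\tfrac{3\pi}{2}<2\pi$, contradicting CAT(1). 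Conversely, when $L$ is flag every triple of pairwise-adjacent vertices bounds a $2$-simplex, so short geodesic triangles stay inside a single spherical simplex where the CAT(1) inequality holds, and the systole is forced up to $2\pi$.

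The main obstacle I expect is this last step: making the local-to-global CAT(1) argument rigorous simultaneously with the induction on the dimension of the link, and pinning down the exact correspondence between empty simplices and closed geodesics of length $<2\pi$ in every dimension, not merely the empty-triangle case. This requires systolic control of closed geodesics in spherical complexes together with the verification that the link-distance computation ($\tfrac{\pi}{2}$ for adjacent and $\geq\pi$ for non-adjacent vertices) propagates correctly through the inductive links; the first two reductions, by contrast, are formal consequences of Cartan--Hadamard and the cone criterion.
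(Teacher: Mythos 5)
Note first that the paper contains no proof of Theorem~\ref{Gromov}: it is quoted from Gromov~\cite{Gr} as background (with~\cite{BrHa} as the standard reference), so your sketch can only be compared with the classical proof. Your architecture is exactly that proof: Cartan--Hadamard reduces the ``if'' direction to nonpositive curvature, Berestovskii's cone criterion reduces curvature at a vertex $x$ to $\Link(x)$ being CAT(1) in the all-right spherical metric, and everything hinges on Gromov's lemma (an all-right spherical complex is CAT(1) iff it is flag), proved by induction on dimension together with the local-to-global criterion for CAT(1) spaces. However, two genuine gaps remain. First, your derivation of a short closed geodesic from non-flagness is wrong as stated for empty simplices of dimension $\geq 3$. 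If $v_0,\dots,v_k$ with $k\geq 3$ is a \emph{minimal} empty simplex, then every triple $v_iv_jv_l$ does span a $2$-simplex, so at each vertex of your edge loop the incoming and outgoing directions are \emph{adjacent} vertices of the link, at distance $\pi/2$, not $\geq\pi$; the loop is then not a local geodesic. The correct treatment splits into cases: for a minimal non-face of size $3$ your argument is valid and yields a closed local geodesic of length $3\pi/2<2\pi$; for size $>3$, minimality forces an empty simplex of smaller size inside $\Link_L(v_0)$, so by the induction hypothesis the link is not CAT(1) and $L$ already fails to be locally CAT(1). Your text conflates these two mechanisms into one.

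Second, the implication you yourself flag as the obstacle --- flag implies no closed geodesic of length $<2\pi$ --- is asserted rather than proved: ``short geodesic triangles stay inside a single spherical simplex'' is false in general, and even where true it does not control the systole. The standard argument (Bridson--Haefliger, II.5.18) analyzes how a closed local geodesic meets the open star of a vertex: cone geometry in an all-right complex forces each such intersection to be an arc of length exactly $\pi$ with endpoints in the link, and flagness (via the simplicial structure of unions of closed stars) is then used to show a loop of total length $<2\pi$ cannot close up. This is the entire content of Gromov's lemma, and without it your induction has no base engine. Two smaller points to patch: the reduction to vertex links needs the observation that links of positive-dimensional faces are links of simplices in vertex links, and that links of simplices in flag complexes are again flag; and applying Cartan--Hadamard requires $(X,\ell_2)$ to be a complete geodesic space, which for finite (or finitely many cell shapes) complexes is Bridson's theorem, worth citing explicitly.
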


A graph $G$ is called \emph{median} if the interval intersection
$I(x,y)\cap I(y,z)\cap I(z,x)$ is a singleton for each triplet $x,y,z$ of vertices. Median graphs are bipartite.
Basic examples of median graphs are trees, hypercubes, rectangular grids, and
Hasse diagrams of distributive lattices and  of median semilattices~\cite{BaCh_survey}.
Any two cubes of a median graphs are either disjoint or intersect in a cube. Therefore, replacing each cube of $G$
by a solid unit cube of the same dimension, we  obtain a cube complex $X_{cube}(G)$.
We continue with the bijection between CAT(0) cube complexes and median graphs:

\begin{theorem}[\!\!\cite{Ch_CAT,Ro}]\label{CAT(0)=median} Median graphs are exactly the 1-skeleta  of CAT(0) cube complexes.
\end{theorem}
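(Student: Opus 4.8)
The plan is to prove the two inclusions separately, in each case invoking Gromov's combinatorial criterion (Theorem~\ref{Gromov}) to pass between the geometric CAT(0) condition and the purely combinatorial data of the graph.

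First I would treat the direction that every median graph $G$ is the $1$-skeleton of a CAT(0) cube complex, namely of $X_{cube}(G)$. By Theorem~\ref{Gromov} it suffices to check that $X_{cube}(G)$ is simply connected and that every vertex link is a flag simplicial complex. The flag condition unwinds to the following statement about $G$: if $v_1,\dots,v_k$ are neighbors of a vertex $v$ that pairwise span a square with $v$ (i.e.\ each pair $v_i,v_j$ lies with $v$ in a common induced $4$-cycle), then $\{v,v_1,\dots,v_k\}$ extends to a common $k$-cube. I would prove this by induction on $k$, using the median operation to locate, at each step, the unique vertex opposite $v$ in the cube being assembled, and then invoking uniqueness of medians to force the cube to close up. For simple connectivity, I would use that median graphs are bipartite and that every cycle can be contracted by repeatedly filling squares; the most convenient route is Mulder's description of median graphs as those obtained from a single vertex by a sequence of convex expansions, since a convex expansion neither creates an unfillable cycle nor destroys simple connectivity.

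For the converse, let $X$ be a CAT(0) cube complex and set $G=G(X)$. Here the main tool is the theory of hyperplanes. I would show that each edge of $G$ determines a hyperplane, that each hyperplane separates $X$ into two halfspaces whose vertex sets are convex in $G$, and that $d_G(u,v)$ equals the number of hyperplanes separating $u$ from $v$. Given three vertices $x,y,z$, for each hyperplane $H$ let $h_H$ be the side containing at least two of $x,y,z$. Any two such halfspaces meet---each contains at least two of the three points, and two $2$-element subsets of a $3$-element set intersect---so by the Helly property for convex halfspaces in a CAT(0) cube complex they share a common vertex $m$. This $m$ lies on the majority side of every hyperplane, which is precisely the condition that no hyperplane separates $m$ from two of the points at once, and hence that $m\in I(x,y)\cap I(y,z)\cap I(z,x)$; the same majority condition forces any vertex in this triple intersection to agree with $m$ on every hyperplane, so the intersection is exactly $\{m\}$.

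The step I expect to be the main obstacle is the hyperplane machinery underlying the converse: proving that halfspaces are convex and enjoy the Helly property. Convexity of halfspaces is where the CAT(0) hypothesis---through the flag link condition and simple connectivity---is genuinely used, since it rules out the reentrant configurations that would let a geodesic cross one hyperplane twice; once convexity is available, the Helly property reduces to the standard fact that pairwise-intersecting convex subcomplexes of a CAT(0) space meet, and the median construction above is then bookkeeping. On the median-to-CAT(0) side, the only delicate point is the inductive closing-up of cubes for the flag condition, which again rests on the existence and uniqueness of medians.
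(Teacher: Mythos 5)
First, a point of reference: the paper does not actually prove Theorem~\ref{CAT(0)=median}; it quotes it from \cite{Ch_CAT} and \cite{Ro}, and the only hint at the method is Theorem~\ref{CAT(0)=median1}, which records that the proof in \cite{Ch_CAT} proceeds through a local-to-global characterization of median graphs (simple connectivity of $X_{cube}(G)$ together with the 3-cube condition) rather than through hyperplanes. Your direction ``median $\Rightarrow$ CAT(0)'' follows the standard line --- Gromov's criterion (Theorem~\ref{Gromov}), closing up cubes by induction using uniqueness of medians (compare the proof of the Downward Cube Property in the paper, which is the same kind of induction), and simple connectivity via Mulder's convex expansions --- and is sound as a sketch. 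Your converse is the Sageev--Roller-style route via hyperplanes and majority vote, a legitimate and well-trodden alternative to Chepoi's argument.

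There is, however, one concrete gap in the converse, and it sits exactly where you locate the main difficulty. You justify the pairwise Helly property for halfspaces by appealing to ``the standard fact that pairwise-intersecting convex subcomplexes of a CAT(0) space meet.'' No such fact exists: the Helly number of convex sets in a CAT(0) space is not $2$. Already in the Euclidean plane the three halfplanes $\{x\ge 1\}$, $\{y\ge 1\}$, and $\{x+y\le 1\}$ are convex and pairwise intersecting, yet their common intersection is empty. The Helly property for halfspaces of a CAT(0) cube complex is a genuinely combinatorial statement about the wall structure, not a consequence of convexity plus nonpositive curvature; it is usually established by a disc-diagram or minimal-path argument (or deduced from the median property itself, which would be circular here). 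Since this lemma is the engine of your majority-vote construction of the median, it needs its own proof rather than a reference to CAT(0) convexity. The other ingredients you defer --- convexity of halfspaces and the formula expressing $d_G$ as the number of separating hyperplanes --- are correctly flagged as the remaining substantive steps, and the majority-vote argument for existence and uniqueness of the median is correct once those and the Helly lemma are in place.
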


The proof of Theorem~\ref{CAT(0)=median} presented in~\cite{Ch_CAT} is fact establishes  the following local-to-global characterization of median graphs:

\begin{theorem}[\!\!\cite{Ch_CAT}]\label{CAT(0)=median1} A graph $G$ is a median graph if and only if its cube complex $X_{cube}(G)$ is simply connected
and $G$ satisfies the 3-cube condition: if three squares of $G$ pairwise intersect in an edge and all three intersect in a vertex,
then they belong to a 3-cube. Furthermore, if $X$ is a CAT(0) cube complex, then $X=X_{cube}(G(X))$, i.e., $X$ can be retrieved from its 1-skeleton $G(X)$ by replacing each graphic cube
of $G(X)$ by a solid cube.
\end{theorem}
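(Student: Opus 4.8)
The plan is to establish the equivalence in both directions and then the recovery statement, with the global consistency of the hyperplane structure serving as the technical heart of the argument.

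For the forward implication, suppose $G$ is median. The $3$-cube condition is a direct consequence of the existence of medians: given three squares that pairwise share an edge and have a common vertex $v$, I would let $a,b,c$ be the three neighbors of $v$ lying on exactly two of the squares, form the three vertices opposite to $v$ in the respective squares, and use the median operation $\med$ on the appropriate triples to produce the eighth vertex; a short check using bipartiteness and the defining uniqueness of medians shows these eight vertices induce a $3$-cube. For simple connectivity of $X_{cube}(G)$, I would argue that median graphs are in fact contractible: one exhibits a \emph{corner peeling}, i.e.\ an ordering of the vertices in which each vertex is removed together with a gated cube it dominates, giving an elementary collapse of $X_{cube}(G)$ down to a point, which in particular yields simple connectivity.

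For the converse, assume $X_{cube}(G)$ is simply connected and $G$ satisfies the $3$-cube condition. I would first show $G$ is bipartite: any odd closed walk would be null-homotopic in the simply connected complex, but filling it by squares (each of even boundary length) preserves the parity of the walk length, a contradiction. The central tool is then the Djokovi\'{c}--Winkler relation $\Theta$ on edges, where $e\,\Theta\,f$ means $e$ and $f$ are metrically opposite; in a bipartite graph adjacent edges of a common square are $\Theta$-related, and the goal is to prove that $\Theta$ is transitive, so that its classes define hyperplanes. \textbf{This transitivity is the main obstacle.} Locally, the square condition handles two squares sharing an edge, and the $3$-cube condition forces the three pairs of opposite edges of any three mutually adjacent squares into a single cube, hence into one $\Theta$-class. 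To pass from this local consistency to genuine global transitivity I would exploit simple connectivity: any apparent $\Theta$-inconsistency detected along a closed walk can be filled by a disk tiled with squares (since the walk is null-homotopic), and the local conditions propagate $\Theta$-equivalence across each tile of the filling, collapsing the inconsistency. Once $\Theta$ is an equivalence relation, each class is a hyperplane whose removal splits $G$ into two halfspaces, and I would verify—again from the square and $3$-cube conditions—that these halfspaces are convex and indeed gated.

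With gated, convex halfspaces available, the median of a triple $x,y,z$ is characterized as the unique vertex lying on the majority side of every hyperplane. Existence reduces to a Helly-type property: the three "majority" halfspaces associated with $x,y,z$ pairwise intersect, and gatedness upgrades pairwise intersection to a common point; uniqueness follows because any two distinct vertices are separated by some hyperplane. This yields the median property, completing the equivalence (and, via Theorem~\ref{Gromov} together with Theorem~\ref{CAT(0)=median}, confirming consistency with the CAT(0) picture). For the recovery statement $X=X_{cube}(G(X))$, one direction is immediate, since each solid cube of the CAT(0) complex $X$ contributes a graphic cube to $G(X)$. Conversely, any subgraph of $G(X)$ isomorphic to $Q_n$ must bound a solid cube: its $2^n$ vertices are forced by the median structure to be the corners of a genuine combinatorial cube, and the flag condition on links—guaranteed by $X$ being CAT(0) through Theorem~\ref{Gromov}—ensures that this cube is actually filled in $X$. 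Hence $X$ is recovered from $G(X)$ by filling in every graphic cube with a solid cube.
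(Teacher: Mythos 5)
First, a point of reference: the paper does not prove Theorem~\ref{CAT(0)=median1} at all --- it is quoted from \cite{Ch_CAT} (with \cite{Ro} for the companion Theorem~\ref{CAT(0)=median}). So your attempt can only be compared with the proofs in the literature, not with an argument in this note. Your forward direction is sound in outline: producing the eighth vertex of the $3$-cube as the median of the three vertices opposite the common corner (using bipartiteness and $K_{2,3}$-freeness to see that this median is a new common neighbour rather than one of the existing vertices), and deducing simple connectivity from collapsibility via a corner peeling, are both correct routes, though the existence of corner peelings is itself nontrivial (compare Lemma~\ref{descendent_cube} and Proposition~\ref{corner-peeling-median}).

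The genuine gap is in the converse, and it sits exactly where you write ``this transitivity is the main obstacle.'' Saying that a $\Theta$-inconsistency along a closed walk ``can be filled by a disk tiled with squares'' and that ``the local conditions propagate $\Theta$-equivalence across each tile'' is a restatement of what must be proved, not a proof: the entire content of this direction is that the local square and $3$-cube conditions plus simple connectivity force global metric conclusions. The known executions are either Chepoi's, which uses minimal disk diagrams (van Kampen's lemma) and an induction on the area of a minimal filling to establish the quadrangle condition and exclude induced $K_{2,3}$, whence medianness; or Sageev-style hyperplane theory, which must show that walls neither self-intersect nor self-osculate and that halfspaces are convex. In either case there is a delicate reduction of a minimal disk using the $3$-cube condition, and none of that case analysis appears in your outline. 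Your later step ``verify --- again from the square and $3$-cube conditions --- that these halfspaces are convex and indeed gated'' is the same global assertion in different clothing, not something readable off the local hypotheses. (Once partial-cube structure and gated halfspaces are granted, your Helly argument for the median is fine.) Finally, for $X=X_{cube}(G(X))$ you invoke flagness of links to fill cubes of dimension at least $3$, but the base case --- that every $4$-cycle of $G(X)$ bounds a $2$-cell of $X$ --- is not addressed; it does not follow from the link condition alone and again requires a simple-connectivity/disk-diagram or convexity argument.
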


\section{Boundary rigidity of CAT(0) cube complexes}

In this section, we describe a simple method to reconstruct  a CAT(0) cube complex $X$ from boundary distances.
By Theorem~\ref{CAT(0)=median1}, it suffices to reconstruct the 1-skeleton $G(X)$ of $X$, because
we know that $X=X_{cube}(G)$. Our method of reconstructing $G(X)$
uses the fact that median graphs admit corner peelings; see the paper by Chalopin, Chepoi, Moran and Warmuth~\cite{ChChMoWa}.
Let $X$ be a CAT(0) cube complex, $\partial X$ be its combinatorial boundary, and
$G(X)$ be the 1-skeleton of $X$. For simplicity, we will denote $G(X)$ by $G$. We denote by $\partial G\subseteq V(X)$ the vertices (0-cubes) of $X$ belonging to $\partial X$.
Recall that the input of the reconstruction problem is the boundary $\partial G$ and the distance matrix $D={(d(x,y))}_{x,y\in \partial G}$, where $d(x,y)$ is computed in $G$.
Therefore, two vertices $x,y$ of $\partial G$ are adjacent in $G$ if and only if $d(x,y)=1$.

\subsection{Facts about median graphs}\label{sec:properties}		
We recall the main properties of median graphs used in the
reconstruction. These results can be found in the papers by Mulder~\cite{Mu,Mu_exp} and are now a
part of folklore for people working in metric graph theory. From now on, $G$ is a finite median graph.  The first
property follows from the definition.
	
\begin{lemma}[Quadrangle Condition]\label{quadrangle}
  For any vertices $u,v,w,z$ of $G$ such that $v$ and $w$ are adjacent
  to $z$ and $d(u,v) = d(u,w) = d(u,z)-1 = k$, there is a unique
  vertex $x$ adjacent to $v$ and $w$ such that $d(u,x) = k-1$ (see
  Fig.~\ref{fig-qc-cond}).
\end{lemma}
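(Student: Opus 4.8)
The plan is to establish the Quadrangle Condition directly from the definition of a median graph, reducing it to the uniqueness of the median of a suitable triple. The statement asks for a unique vertex $x$ adjacent to both $v$ and $w$ with $d(u,x) = k-1$; the natural candidate is the median $m(u,v,w)$ of the three vertices $u,v,w$, which exists and is unique by the median property. **First I would** verify that this median does the job: since $v,w \in I(u,v)$ and $I(u,w)$ trivially, and both lie at distance $k$ from $u$, while $z$ realizes $d(u,z)=k+1$ with $v,w \sim z$, the vertices $v$ and $w$ are the two neighbors of $z$ on shortest paths toward $u$. I would set $x := m(u,v,w)$ and show $x \in I(v,w)$ with $d(u,x)=k-1$.

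\emph{Locating the median.} The key observation is that $d(v,w)=2$. Indeed $v \sim z \sim w$ gives $d(v,w)\le 2$, and $v\ne w$ with $G$ bipartite (median graphs are bipartite) forces $d(v,w)=2$, so $v$ and $w$ are not adjacent. Hence the interval $I(v,w)$ consists of $v$, $w$, and the common neighbors of $v$ and $w$. The median $x=m(u,v,w)$ lies in $I(v,w)$, so it is a common neighbor of $v$ and $w$ (it cannot equal $v$ or $w$ since those are at distance $k$ from $u$, whereas I will show $x$ is closer). Because $x \in I(u,v)\cap I(u,w)$ and $d(u,v)=d(u,w)=k$, together with $x \sim v$, we get $d(u,x)=d(u,v)-1=k-1$, which is exactly the required distance.

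\emph{Uniqueness.} For uniqueness, suppose $x'$ is any vertex adjacent to both $v$ and $w$ with $d(u,x')=k-1$. Then $x' \in I(u,v)$ (via $x'\sim v$ and $d(u,x')=d(u,v)-1$), likewise $x'\in I(u,w)$, and $x'\in I(v,w)$ since $d(v,w)=2=d(v,x')+d(x',w)$. Therefore $x'$ lies in $I(u,v)\cap I(u,w)\cap I(v,w)$, which is the singleton $\{m(u,v,w)\}$ by the median property. Hence $x'=x$, giving uniqueness.

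**The main obstacle** is largely bookkeeping rather than any deep difficulty: one must be careful that the median $m(u,v,w)$ genuinely lands on a \emph{common neighbor} of $v$ and $w$ and does not degenerate to $v$ or $w$ themselves, and that the distance computation $d(u,x)=k-1$ is forced rather than merely possible. The bipartiteness of median graphs is the clean lever here, since it rules out $v\sim w$ and pins $d(v,w)=2$, after which the interval $I(v,w)$ has the simple three-level structure that makes both existence and uniqueness immediate from the defining median property. I expect no serious technical hurdle beyond verifying these adjacency and distance constraints.
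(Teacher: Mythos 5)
Your proof is correct and takes exactly the route the paper intends: the paper states that the Quadrangle Condition ``follows from the definition'' of a median graph without writing out details, and your argument (taking $x=m(u,v,w)$, using bipartiteness to force $d(v,w)=2$, and using uniqueness of the median for uniqueness of $x$) is the standard way to fill in that claim. The only cosmetic blemish is the unused and slightly inaccurate remark that $v,w$ are ``the two'' neighbors of $z$ toward $u$; the actual argument does not rely on it.
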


\begin{figure}[h]
\begin{center}
\includegraphics{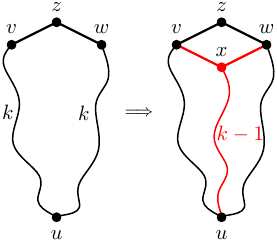}
\end{center}
\caption{Quadrangle condition}\label{fig-qc-cond}
\end{figure}

The next lemma follows from the fact that convex subgraphs of median graphs are gated and that cubes are convex subgraphs:

\begin{lemma}[Cubes are gated]\label{convex-gated}
Cubes of median graphs are gated.
\end{lemma}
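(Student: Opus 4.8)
The plan is to derive the statement from two facts, exactly as the paper suggests: that cubes are convex subgraphs of $G$, and that every convex subgraph of a median graph is gated. Since the preliminaries already record that gated sets are convex, this is the natural decomposition, and the only genuine work lies in establishing these two facts. I expect the clean, conceptual step to be the implication ``convex $\Rightarrow$ gated,'' and the more delicate, computational step to be the convexity (equivalently, the \emph{isometry}) of cubes.

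For the implication that a convex subgraph $H$ is gated, I would argue as follows. Fix a vertex $x\notin H$ and choose $x^*\in H$ minimizing $d(x,\cdot)$ over $H$; I claim $x^*$ is the gate of $x$. Indeed, for an arbitrary $y\in H$ let $m=\med(x,x^*,y)$ be the median vertex, so that $m\in I(x,x^*)\cap I(x^*,y)\cap I(x,y)$. Convexity of $H$ gives $I(x^*,y)\subseteq H$, hence $m\in H$; and $m\in I(x,x^*)$ yields $d(x,m)\le d(x,x^*)$. Minimality of $x^*$ forces $d(x,m)=d(x,x^*)$, so $m=x^*$. Since $m\in I(x,y)$, this gives $x^*\in I(x,y)$, i.e.\ $d(x,x^*)+d(x^*,y)=d(x,y)$. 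As $y$ was arbitrary, $x^*$ is a gate, and $H$ is gated. The essential ingredient here is the uniqueness of the median, i.e.\ precisely the median property of $G$.

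It remains to prove that cubes are convex. Identifying the vertices of a cube $Q$ with the subsets of $[n]$, the cube-distance between $A$ and $B$ is $|A\triangle B|$, so it suffices to show that $Q$ is isometric in $G$ and that $I(A,B)\subseteq Q$ for all $A,B\in Q$. The model computation is the two-dimensional case: for a square $uvwz$ with $u,w$ opposite, a third common neighbor $t$ of $u$ and $w$ would place both $u$ and $w$ in $I(v,z)\cap I(v,t)\cap I(z,t)$, contradicting the uniqueness of $\med(v,z,t)$; hence $I(u,w)=\{u,v,w,z\}$ and squares are convex. I would then promote this to arbitrary cubes either by invoking the folklore local-to-global principle that a connected, locally convex subgraph of a median graph is convex (the $2$-faces of $Q$ being convex squares), or by induction on $\dim Q$, splitting $Q$ along one coordinate into two $(n-1)$-dimensional facets joined by a perfect matching and using that the facets are convex, hence gated, by the previous step. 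Both routes are standard in median-graph theory \cite{Mu,Mu_exp,BaCh_survey}.

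The main obstacle is exactly the isometry of cubes: ruling out a shortcut in $G$ between two vertices of $Q$ that would bypass the cube and break $d(A,B)=|A\triangle B|$. Everything downstream---median-closedness of $Q$, local convexity, and finally gatedness---is routine once no such shortcut exists. I would control this using the hyperplane structure of median graphs, where each coordinate direction of $Q$ extends to a single hyperplane and crossing it costs at least one edge, so that a $G$-geodesic between $A$ and $B$ must cross at least $|A\triangle B|$ distinct hyperplanes; combined with the bipartiteness of $G$, this pins the distance down and yields isometry. With isometry secured, convexity and hence gatedness follow as above.
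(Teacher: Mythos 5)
Your proposal is correct and follows exactly the route the paper itself indicates: the lemma is stated there without proof, justified by precisely the two facts you establish (convex subgraphs of median graphs are gated, and cubes are convex). Both of your supporting arguments --- the median-based proof that convexity implies gatedness, and the convexity of cubes via convexity of squares plus the local-to-global principle (or induction on dimension) --- are the standard ones and are sound.
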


Let $z$ be a basepoint of a median graph $G$ ($z$ is a fixed but arbitrary vertex of $G$). For a vertex $v$, $\Lambda(v)$
consists of all neighbors of $v$ in the interval $I(z,v)$. A graph $G$ satisfies the \emph{downward cube property with respect to $z$} if for each vertex
$v$, all its neighbors from $\Lambda(v)$ together with $v$ itself belong to a unique cube $C(v)$ of
$G$. In this case, we denote  by $\overline{v}$ the unique vertex
of the cube $C(v)$ opposite to $v$.  That median graphs satisfy the downward property was proved in~\cite{Mu}.  We provide a simple proof of this result from the paper by B\'en\'eteau, Chalopin, Chepoi, and Vax\`es~\cite{BeChChVa}:
	
\begin{lemma}[\!\!\cite{Mu})(Downward Cube Property]\label{descendent_cube}
  Let $G$ be a median graph and fix an arbitrary basepoint $z$ of
  $G$. Then $G$ satisfies the downward cube property with respect to
  $z$. Moreover, for each vertex $v$ of $G$, the vertex $\overline{v}$
  is the gate of $z$ in the cube $C(v)$.
\end{lemma}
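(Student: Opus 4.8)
The plan is to prove the Downward Cube Property by analyzing the structure of $\Lambda(v)$, the set of neighbors of $v$ lying in the interval $I(z,v)$, and showing these neighbors are the facets of a single cube having $v$ as a vertex. First I would establish that any two vertices of $\Lambda(v)$ lie at distance $2$ from each other and have both $v$ and a common vertex at distance $d(z,v)-2$ in their interval. Concretely, take $u,w \in \Lambda(v)$ with $u \neq w$. Since $u,w$ are adjacent to $v$ and satisfy $d(z,u) = d(z,w) = d(z,v)-1$, the median $m = m(z,u,w)$ of the triple $z,u,w$ is a vertex of $G$, and I would argue that $m$ is adjacent to both $u$ and $w$ and that $uvwm$ forms a square. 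This is exactly the situation of the Quadrangle Condition (Lemma~\ref{quadrangle}) read with the roles reversed: $v$ and $m$ are the two common neighbors of $u,w$, with $v$ on the far side from $z$ and $m$ on the near side; uniqueness of $m$ comes from the median property.

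Once every pair in $\Lambda(v)$ spans a square through $v$, the next step is to promote this pairwise cube structure to a single global cube containing all of $\Lambda(v) \cup \{v\}$. The key tool here is the $3$-cube condition from Theorem~\ref{CAT(0)=median1}: if three squares sharing the vertex $v$ pairwise meet in an edge, they lie in a common $3$-cube. I would proceed by induction on $|\Lambda(v)| = k$, maintaining the hypothesis that the first $j$ neighbors in $\Lambda(v)$ together with $v$ span a $j$-dimensional cube $C_j$. To extend $C_j$ by one more neighbor, I would use the $3$-cube condition repeatedly (or an established median-graph fact that the squares through $v$ in the "downward" direction close up to a cube, which follows from the flag/link condition of Theorem~\ref{Gromov}) to produce the opposite vertex in each new dimension and verify that all $2^{j+1}$ resulting vertices are present and correctly adjacent. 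The flag condition ensures no "missing hyperplanes": any set of pairwise-compatible edges at $v$ fills in to a genuine cube. This yields a unique cube $C(v)$ whose edges at $v$ are precisely $\Lambda(v)$, and uniqueness follows because $\Lambda(v)$ is determined by $v$ and $z$, while a cube is determined by its edge-directions at a single vertex.

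For the final "moreover" clause, I would identify $\overline{v}$, the vertex of $C(v)$ opposite to $v$, as the gate of $z$ in $C(v)$. Since cubes are gated by Lemma~\ref{convex-gated}, the gate $g$ of $z$ in $C(v)$ exists and satisfies $g \in I(z,x)$ for every $x \in C(v)$; in particular $g \in I(z,v)$. I would show $g = \overline{v}$ by a distance count: the gate must decrease the distance to $z$ along every one of the $k$ coordinate directions of $C(v)$ that point toward $z$, and by construction each generator in $\Lambda(v)$ is a step from $v$ strictly toward $z$, so moving $v$ along all $k$ of these directions simultaneously reaches the antipode $\overline{v}$ with $d(z,\overline{v}) = d(z,v) - k$. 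Any vertex of $C(v)$ other than $\overline{v}$ retains at least one coordinate pointing away from $z$ and hence is farther from $z$, forcing the gate to be exactly $\overline{v}$.

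I expect the main obstacle to be the second step: passing from the pairwise square structure to a single ambient cube of the full dimension $|\Lambda(v)|$. The pairwise quadrangle argument is routine, but closing up many squares into one high-dimensional cube requires care to invoke the $3$-cube condition and the flag-complex hypothesis correctly and to check that the inductively constructed vertices are genuinely distinct and correctly adjacent — in essence, that there are no coincidences or degeneracies among the $2^k$ candidate vertices. The cleanest route is probably to phrase this as a direct application of the characterization in Theorem~\ref{CAT(0)=median1}, reducing the combinatorial bookkeeping to the already-established local-to-global principle rather than reproving cube-closure from scratch.
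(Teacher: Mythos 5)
Your proposal is correct in substance but routes the key step through different machinery than the paper. The first step (any two vertices of $\Lambda(v)$ span a square with $v$ and a common neighbor one step closer to $z$) is exactly the paper's $k=2$ base case, obtained there via the quadrangle condition just as you describe. Where you diverge is the passage from pairwise squares to a single $|\Lambda(v)|$-dimensional cube: you delegate this to the flag link condition, i.e., you chain Theorem~\ref{CAT(0)=median} and Theorem~\ref{Gromov} to conclude that pairwise-compatible edges at $v$ fill in to a genuine cube. That is logically valid inside this paper, since both theorems are stated beforehand as citable results, and it makes your argument shorter. The paper instead gives a self-contained double induction on $d(z,v)$ and on the number $k$ of chosen neighbors, using only the quadrangle condition, $K_{2,3}$-freeness, and gatedness of cubes: it builds the $k$-cube explicitly as the union of a $(k-1)$-cube $R''$ through $v$ and a $(k-1)$-cube $R'$ through $u_1$ one level down, and matches them up facet by facet. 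Two remarks on the trade-off. First, the standard proof of Theorem~\ref{CAT(0)=median} in the literature establishes Gromov's link condition for median graphs by essentially the same induction the paper performs here, so your route is close to circular at the level of the underlying mathematics (though not within the paper's formal structure); this is presumably why the authors chose to include an elementary proof. Second, your alternative of ``applying the 3-cube condition repeatedly'' is not by itself sufficient for $k\ge 4$ without an induction of the paper's type, so the flag-condition route is the only one of your two options that genuinely avoids the bookkeeping — you correctly flagged this as the main obstacle. Your identification of $\overline{v}$ as the gate of $z$ via gatedness and the fact that every $u_i\in\Lambda(v)$ is strictly closer to $z$ is the same one-line argument as the paper's.
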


\begin{proof} Fix a basepoint $z$, pick any vertex $v$, and let
  $\Lambda(v)=\{ u_1,\ldots,u_d\}$.
We prove that $v$ and any subset of $k$ vertices of $\Lambda (v)$
  belong to a unique cube of dimension $k$. We prove this result by
  induction on $d(z,v)$ and on the number $k$ of chosen neighbors.
  The assertion holds in the base case $d(z,v)=2$: since median
  graphs are $K_{2,3}$-free, $z$ and $v$ have one or two common
  neighbors. The assertion also holds in another base case when
  $k=2$: by the quadrangle condition, the two neighbors of $v$ in
  $\Lambda (v)$ have a unique common neighbor one step closer to $z$,
  yielding a 2-cube. Now pick $v$ and the $k\ge 3$ neighbors $u_1,\ldots,u_k$ and assume
  that $v$ and any $2\le k'<k$ neighbors of $v$ in $\Lambda (v)$
  belong to a unique $k'$-cube. By the quadrangle condition, for each
  $2 \leq i \leq k$, there exist a unique common neighbor $z_i$ of
  $u_1$ and $u_i$ one step closer to $z$.  Since median graphs are
  $K_{2,3}$-free, the vertices $z_2,\ldots,z_k$ are distinct neighbors
  of $u_1$ in $\Lambda(u_1)$. By induction hypothesis, $u_1$ and
  $z_2,\ldots,z_k$ belong to a unique $(k-1)$-cube $R'$. Analogously,
  $v$ and its neighbors $u_2,\ldots, u_k\in \Lambda (v)$ belong to a
  unique $(k-1)$-cube $R''$. We assert that the subgraph of $G$
  induced by the vertices of $R'\cup R''$ is a $k$-cube of $G$. For
  each $i=2,\dots, k$, the set $\Lambda(u_i)$ consists of the vertex
  $z_i$ and the $k-2$ neighbors in $R''$ other than $v$. By induction
  hypothesis, $u_i$, $z_i$, and the neighbors of $u_i$ in $R''$ other
  than $v$ define a $(k-1)$-cube $R_i$ of $G$. This implies that there
  exists an isomorphism between the facet of $R'$ containing $z_i$ and
  not $u_1$ and the facet of $R''$ containing $u_i$ and not containing
  $v$. Since $R'$ and $R''$ are gated and thus convex, this defines an
  isomorphism between $R'$ and $R''$ which maps vertices of $R'$ to
  their neighbors in $R''$. Hence $R'\cup R''$ defines a $k$-cube of
  $G$.  The unicity of $R'$ and $R''$ ensures that $R'\cup R''$ is the
  unique $k$-cube of $G$ containing $v$ and $u_1, \ldots, u_k$,
  finishing the proof of the first assertion. Since all vertices of
  $\Lambda(v)$ are closer to $z$ than $v$ and the cube $C(v)$ is
  gated, the gate of $z$ in $C(v)$ is necessarily the vertex
  $\overline{v}$.
\end{proof}

\subsection{Corner peelings of median graphs}

A \emph{corner} of a graph $G$ is a vertex $v$ of $G$ such that $v$ and all its neighbors in $G$ belong to a unique cube of $G$.
A \emph{corner peeling} of a finite graph $G$ is a total order $v_1,\ldots,v_n$ of $V(G)$ such that $v_i$ is a corner of the subgraph $G_i=G[v_1,\ldots,v_i]$ induced by the first $i$ vertices of this order.
If $v_1,\ldots, v_n$ is a corner peeling of $G$, then each level-graph $G_i$ ($i=n,\ldots,1$) is an isometric subgraph of $G$. A \emph{monotone corner peeling} of $G$ with respect to a basepoint $z$ is a corner peeling $v_1=z,v_2,\ldots,v_n$ such that $d(z,v_1)\le d(z,v_2)\le\cdots\le d(z,v_n)$. That median graphs admit monotone
corner peelings follows from~\cite[Proposition 4.9]{ChChMoWa} (which was proved in a more general setting but with respect to a particular basepoint). We need a stronger version  of this result:

\begin{proposition}\label{corner-peeling-median} For any finite median graph $G$ and any basepoint $z$, any ordering $v_1=z,v_2,\ldots,v_n$ of $V(G)$
such that $d(z,v_1)\le d(z,v_2)\le\cdots\le d(z,v_n)$ is a monotone corner peeling of $G$. Furthermore, $C(v_i)$ is the unique cube of $G_i$ containing $v_i$ and the neighbors of $v_i$ in $G_i$. The vertex $\overline{v_i}$
opposite to $v_i$ in $C(v_i)$ is the gate of $z$ in $C_i$ (in $G$ and in $G_i$).
\end{proposition}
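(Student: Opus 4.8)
The plan is to show that for any basepoint $z$ and any ordering $v_1=z,v_2,\ldots,v_n$ of $V(G)$ sorted by nondecreasing distance from $z$, each prefix $G_i$ already contains the cube $C(v_i)$ provided by Lemma~\ref{descendent_cube}, and that this cube witnesses $v_i$ being a corner of $G_i$. The key observation is that $C(v_i)$ is spanned by $v_i$ together with $\Lambda(v_i)$, the neighbors of $v_i$ lying in $I(z,v_i)$; since every such neighbor is strictly closer to $z$ than $v_i$, and ties in distance are irrelevant (a neighbor cannot be at the same distance from $z$ as $v_i$ because median graphs are bipartite), all vertices of $\Lambda(v_i)$ precede $v_i$ in the ordering. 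More strongly, since $C(v_i)$ is gated (Lemma~\ref{convex-gated}) with gate $\overline{v_i}$ equal to the gate of $z$, every vertex of $C(v_i)$ lies on a shortest path from $z$ to $v_i$ and hence is at distance at most $d(z,v_i)$ from $z$, so the entire cube $C(v_i)$ is contained in $G_i$.

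First I would verify that the neighbors of $v_i$ \emph{inside} $G_i$ are exactly $\Lambda(v_i)$. Any neighbor $u$ of $v_i$ in $G_i$ satisfies $d(z,u)\le d(z,v_i)$; since $u\sim v_i$ and the graph is bipartite, $d(z,u)$ and $d(z,v_i)$ differ by exactly one, forcing $d(z,u)=d(z,v_i)-1$, i.e.\ $u\in I(z,v_i)$ and thus $u\in\Lambda(v_i)$. Conversely every vertex of $\Lambda(v_i)$ is a neighbor of $v_i$ strictly closer to $z$, hence appears before $v_i$ and lies in $G_i$. Therefore the neighbors of $v_i$ in $G_i$ coincide with $\Lambda(v_i)$, and by Lemma~\ref{descendent_cube} these together with $v_i$ span the unique cube $C(v_i)$ of $G$.

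Next I would argue that $C(v_i)$ is also a cube of $G_i$ and is the \emph{unique} cube of $G_i$ containing $v_i$ and all its $G_i$-neighbors. Containment in $G_i$ follows from the gatedness argument above: for any vertex $w$ of $C(v_i)$, convexity of the gated cube places $w$ on a geodesic between the two opposite corners $\overline{v_i}$ and $v_i$, giving $d(z,w)\le d(z,\overline{v_i})+d(\overline{v_i},w)\le d(z,v_i)$, so $w\in V(G_i)$. Uniqueness in $G_i$ is inherited from uniqueness in $G$: a cube of $G_i$ containing $v_i$ and $\Lambda(v_i)$ is in particular such a cube in $G$, and Lemma~\ref{descendent_cube} asserts there is only one. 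This simultaneously shows $v_i$ is a corner of $G_i$ (its neighborhood together with itself lies in the unique cube $C(v_i)$) and identifies the cube, so the ordering is a corner peeling, and it is monotone by construction.

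Finally the statement that $\overline{v_i}$ is the gate of $z$ in $C(v_i)$ both in $G$ and in $G_i$ follows immediately from Lemma~\ref{descendent_cube} for the ambient graph $G$; since $C(v_i)\subseteq G_i\subseteq G$ isometrically and the gate is characterized metrically, the gate computed within $G_i$ is the same vertex. The only point requiring a little care — and the step I expect to be the main obstacle — is confirming that the neighbors of $v_i$ in $G_i$ are precisely $\Lambda(v_i)$ with no extra neighbors creeping in from ties in the distance ordering; this is exactly where bipartiteness of median graphs is essential, since it rules out any $G_i$-neighbor of $v_i$ at the same distance from $z$.
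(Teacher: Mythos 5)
Your proposal follows essentially the same route as the paper's proof: identify the $G_i$-neighborhood of $v_i$ with $\Lambda(v_i)$ via bipartiteness, invoke the Downward Cube Property to get the unique cube $C(v_i)$, and use gatedness of $C(v_i)$ with gate $\overline{v_i}$ to show the whole cube already lies in $G_i$. One small imprecision to fix: you conclude $w\in V(G_i)$ from $d(z,w)\le d(z,v_i)$, but this is not enough, since the ordering breaks ties in distance arbitrarily and a vertex at distance exactly $d(z,v_i)$ could be $v_j$ with $j>i$. You need the strict inequality $d(z,w)<d(z,v_i)$ for every $w\in C(v_i)$ with $w\ne v_i$, which does follow from your own observation that $w$ lies on a geodesic from $z$ to $v_i$ (so $d(z,w)=d(z,v_i)-d(w,v_i)<d(z,v_i)$); this is exactly how the paper argues, writing $d(v_i,z)=d(v_i,\overline{v_i})+d(\overline{v_i},z)$ and noting that every other vertex of $C(v_i)=I(v_i,\overline{v_i})$ is strictly closer to $z$.
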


\begin{proof}  Pick any vertex $v_i$. Since $d(z,v_1)\le d(z,v_2)\le\cdots\le d(z,v_i)\le\cdots\le d(z,v_n)$ holds, for any neighbor $v_j$ of $v_i$ in $G_i$ we have $d(z,v_j)\le d(z,v_i)$. Since $G$ is bipartite, we must have $d(z,v_j)<d(z,v_i)$. Consequently, all neighbors of $v_i$ in $G_i$ belong to the interval $I(z,v_i)$ and thus to the set $\Lambda(v_i)$. Conversely, for any vertex $v_j$ of $\Lambda(v_i)$ we have
$d(z,v_j)<d(z,v_i)$, thus $v_j$ belongs to $G_i$. Consequently, the set of neighbors of $v_i$ in $G_i$ coincides with $\Lambda (v_i)$. By
Lemma~\ref{descendent_cube}, $v_i$ and the set  $\Lambda(v_i)$ of all neighbors of $v_i$ in $G_i$ belong to a unique cube $C(v_i)$ of $G$. We assert that $C(v_i)$ is a cube of $G_i$.
Indeed, by the second assertion of Lemma~\ref{descendent_cube} the vertex $\overline{v_i}$ opposite to $v_i$ in $C(v_i)$ is the gate of $z$ in $C(v_i)$. Since $C(v_i)$ is the cube induced by $I(v_i,\overline{v_i})$ and $d(v_i,z)=d(v_i,\overline{v_i})+d(\overline{v_i},z)$,
for any vertex $v_j\in C(v_i)$ other than $v_i$ we obtain that $d(v_j,z)<d(v_i,z)$. Consequently, all vertices of $C(v_i)$ belong to  $G_i$,
hence  $v_i$  is a corner of $G_i$ and $C(v_i)$ is the unique cube of $G_i$ containing $v_i$ and its neighbors in $G_i$. The last assertion now follows from the second assertion of Lemma~\ref{descendent_cube}. \end{proof}

\begin{remark}
  The level-graphs $G_i$ ($i=n-1,\ldots, 1)$ are not necessarily
  median graphs.  Notice also that since the cube complexes of graphs
  with a corner peeling are collapsible, this provide an alternative
  proof of the result of Adiprasito and Benedetti~\cite{AdBe} that
  CAT(0) cube complexes are collapsible.
\end{remark}

Let $z$ be an arbitrary fixed vertex of $\partial G$ and let $v_1=z,v_2,\ldots,v_n$ be a monotone corner peeling of $G$ defined as in Proposition~\ref{corner-peeling-median}. 
For all $i=n,\ldots,1$ denote by $\partial G_i$ the set of vertices belonging to the boundary of the cube complex $X_i=X_{cube}(G_i)$.
Notice that $G_n=G$ and thus $\partial G_n=\partial G$. As in Proposition~\ref{corner-peeling-median},
for each vertex $v_i$ we denote by $C_i=C(v_i)$ the unique cube of $G_i$ containing the vertex $v_i$ and the set $\Lambda(v_i)$ of all neighbors of $v_i$ in $G_i$.

Let $u_i$ denote the opposite vertex $u_i=\overline{v_i}$ of $v_i$ in the cube $C_i$. From the second assertion of  Proposition~\ref{corner-peeling-median} we know that $u_i$ is the gate of
$z$ in the cube $C_i$.  Then $V(C_i)=I(v_i,u_i)$, where the interval $I(v_i,u_i)$ is considered in $G$ or in $G_i$.

From the definition of a corner it immediately follows that all
corners of a median graph $G$ belong to the boundary $\partial G$. In
fact, the following lemma shows that for any corner $v$ of $G$, any
vertex of $C(v)$ belongs to $\partial G$ except possibly
$\overline{v}$.

\begin{lemma}\label{boundary_corner} All vertices of the cube $C_i$ belong to the boundary $\partial G_i$ of $G_i$, except  possibly $u_i$.
\end{lemma}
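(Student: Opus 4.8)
The plan is to show that each vertex $w$ of $C_i$ other than $u_i$ is a facet of a unique cube in $X_i$, which is exactly the condition for membership in $\partial G_i$. First I would recall that $C_i$ is the unique cube of $G_i$ containing $v_i$ together with all its neighbors in $G_i$, and that $u_i=\overline{v_i}$ is the gate of $z$ in $C_i$, so $V(C_i)=I(v_i,u_i)$. The key structural observation is that $v_i$ is a corner of $G_i$: every neighbor of $v_i$ in $G_i$ lies in $C_i$, so $v_i$ has no neighbor in $G_i$ outside the cube $C_i$.

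The main argument is then to look at any vertex $w\in V(C_i)\setminus\{u_i\}$ and exhibit a facet of $C_i$ containing $w$ that is not shared with any other cube of $G_i$. The natural candidate is the facet $F_w$ of $C_i$ spanned by the edges of $C_i$ incident to $w$ that point ``toward'' $v_i$, i.e. the facet of $C_i$ opposite to the facet through $u_i$ in the coordinate direction of the edge $v_iw'$ where $w'$ is the neighbor of $w$ on a shortest $(w,v_i)$-path. More concretely, since $w\neq u_i$, there is at least one coordinate direction of $C_i$ in which $w$ lies on the $v_i$-side; picking such a direction gives a facet $F$ of $C_i$ with $w\in F$ and $v_i\in F$. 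I would then argue that $F$ is a facet of a unique cube of $G_i$, namely $C_i$ itself.

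The heart of the proof, and the step I expect to be the main obstacle, is establishing that no facet $F$ of $C_i$ containing $v_i$ extends to a second cube of $G_i$ beyond $C_i$. Here I would use that $v_i$ is a corner: if $F$ were a facet of some cube $C'\neq C_i$ of $G_i$, then $C'$ would contain $v_i$ and at least one vertex adjacent to $v_i$ lying outside $C_i$ (the vertex of $C'$ opposite, across $C'$, to a vertex of $F$). But all neighbors of $v_i$ in $G_i$ lie in $C_i=C(v_i)$, since $\Lambda(v_i)$ is precisely the neighbor set of $v_i$ in $G_i$ by Proposition~\ref{corner-peeling-median}. This contradiction forces $C'=C_i$, so every facet of $C_i$ through $v_i$ belongs to a unique cube. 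Care is needed to handle a general $w\neq u_i$ rather than just $v_i$ itself; the point is that for such $w$ one can always choose the facet $F$ to contain $v_i$, because $w$ differs from $u_i$ in some coordinate, and moving $w$ to agree with $v_i$ in all the remaining coordinates keeps us inside a facet through both $w$ and $v_i$.

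Finally I would note that $u_i$ genuinely may fail to be in $\partial G_i$: it is the unique vertex of $C_i$ all of whose incident facets point away from $v_i$, so those facets can (and in general will) be shared with further cubes of $G_i$ glued on the $z$-side, which is why the lemma excepts $u_i$. Assembling these observations, every $w\in V(C_i)\setminus\{u_i\}$ lies in a facet belonging to a unique cube of $G_i$, hence $w\in\partial G_i$, completing the proof.
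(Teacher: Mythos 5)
Your proposal is correct and follows essentially the same route as the paper: both arguments observe that every vertex of $C_i$ other than the antipode $u_i$ lies on a facet of $C_i$ through $v_i$, and that such a facet cannot be a facet of any cube of $G_i$ other than $C_i$ because all neighbors of $v_i$ in $G_i$ already lie in $C_i$. The paper's proof is just a more compressed version of the same argument.
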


\begin{proof} Since in $G_i$ all neighbors of $v_i$ belong to the cube
  $C_i$, any facet of $C_i$ containing $v_i$ is not properly contained
  in any other cube of $G_i$. Since all vertices on such facets belong
  to $\partial G_i$ and any vertex of $C_i$ except $u_i$ belong to
  such a facet, we conclude that every vertex of $C_i$ different from
  $u_i$ belongs to $\partial G_i$.
\end{proof}

Set $S(G_n)$ to be the set $\partial G_n=\partial G$ and iteratively define  the set $S(G_{i-1})$ to be $S(G_i) \setminus \{ v_i\} \cup \{ u_i\}$ $(i=n-1,\ldots, 2)$. We will call $S(G_i)$ the \emph{extended boundary} of $G_i$; the name is justified by the following result:

\begin{lemma}\label{boundary_corner2} For all $i=n,\ldots, 2$, we have $\partial G_{i-1} \subseteq \partial G_{i} \cup \{u_i\}$
  and $\partial G_i \subseteq S(G_i)$.
\end{lemma}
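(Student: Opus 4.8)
The lemma has two separate inclusions to establish, and I would treat them independently. The second one, $\partial G_i \subseteq S(G_i)$, is really a statement to be proved by downward induction on $i$ using the recursive definition $S(G_{i-1}) = S(G_i)\setminus\{v_i\}\cup\{u_i\}$, together with the first inclusion as the engine of the induction step. So the plan is to prove the first inclusion $\partial G_{i-1}\subseteq \partial G_i \cup \{u_i\}$ directly for each $i$, and then bootstrap the second inclusion from it.

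**The first inclusion.**

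The plan is to compare the two cube complexes $X_i = X_{cube}(G_i)$ and $X_{i-1} = X_{cube}(G_{i-1})$, which differ only by the removal of the single corner vertex $v_i$ (recall $G_{i-1} = G_i[\,v_1,\dots,v_{i-1}]$ and $V(G_i)\setminus V(G_{i-1}) = \{v_i\}$). I want to show that any vertex $w$ that is on the boundary of $G_{i-1}$ but not on the boundary of $G_i$ must be $u_i$. Suppose $w\in\partial G_{i-1}$ and $w\neq u_i$. Since $w\neq v_i$ as well, $w$ is a vertex of $G_i$, and I claim $w\in\partial G_i$. The key point is to understand how deleting $v_i$ changes the local cube structure: by Proposition~\ref{corner-peeling-median} the only cubes of $G_i$ that contain $v_i$ all lie inside $C_i$, so for any vertex $w\notin V(C_i)$ the cells of $X_i$ and $X_{i-1}$ incident to $w$ coincide, whence $w\in\partial G_{i-1}$ forces $w\in\partial G_i$ immediately. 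The remaining case is $w\in V(C_i)\setminus\{v_i,u_i\}$, and here I would argue that such $w$ already lies on $\partial G_i$ by Lemma~\ref{boundary_corner} (every vertex of $C_i$ except possibly $u_i$ is on $\partial G_i$), so it contributes nothing new. This disposes of every $w\neq u_i$, giving $\partial G_{i-1}\subseteq \partial G_i\cup\{u_i\}$.

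**The second inclusion by downward induction.**

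For $\partial G_i\subseteq S(G_i)$ I would induct downward from $i=n$. The base case $i=n$ is the definition: $S(G_n)=\partial G_n=\partial G$, so equality holds. For the inductive step, assume $\partial G_i\subseteq S(G_i)$ and deduce $\partial G_{i-1}\subseteq S(G_{i-1})$. By the first inclusion, $\partial G_{i-1}\subseteq \partial G_i\cup\{u_i\}$. Applying the inductive hypothesis to the first term gives $\partial G_{i-1}\subseteq S(G_i)\cup\{u_i\}$. Now by definition $S(G_{i-1}) = (S(G_i)\setminus\{v_i\})\cup\{u_i\}$, so the only possible obstruction is the vertex $v_i$: I must check that $v_i\notin\partial G_{i-1}$, i.e.\ that removing $v_i$ along with it being absent from $G_{i-1}$ means $v_i$ cannot reappear in the smaller boundary. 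But $v_i\notin V(G_{i-1})$ at all, so trivially $v_i\notin\partial G_{i-1}$, and hence dropping $v_i$ from $S(G_i)$ loses nothing we need. This yields $\partial G_{i-1}\subseteq (S(G_i)\setminus\{v_i\})\cup\{u_i\} = S(G_{i-1})$, completing the induction.

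**Anticipated main obstacle.**

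The routine parts are the induction and the trivial membership checks; the one step demanding genuine care is the case analysis in the first inclusion, specifically controlling what happens to a vertex $w\in V(C_i)\setminus\{v_i,u_i\}$ when $v_i$ is deleted. I expect the subtlety to be verifying that deleting $v_i$ does not create a \emph{new} free facet at such a $w$ that would have been interior in $G_i$ but is exposed in $G_{i-1}$ — though since Lemma~\ref{boundary_corner} already places these vertices on $\partial G_i$, this collapses to confirming membership rather than proving a fresh boundary property. I would therefore spend most of the write-up making precise the claim that outside $C_i$ the two complexes $X_i$ and $X_{i-1}$ are locally identical around every vertex, so that boundary membership is preserved there.
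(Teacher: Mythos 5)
Your proposal is correct and follows essentially the same route as the paper: the first inclusion rests on the same two facts (all cubes of $G_i$ through the corner $v_i$ lie in $C_i$, plus Lemma~\ref{boundary_corner}), merely phrased contrapositively with an explicit case split on whether $w\in V(C_i)$, and the downward induction for $\partial G_i\subseteq S(G_i)$ is identical, including the observation that $v_i\notin V(G_{i-1})$ makes the removal of $v_i$ from $S(G_i)$ harmless.
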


\begin{proof}
  We first prove that any vertex from $\partial G_{i-1}$ belongs to
  $\partial G_i\cup \{ u_i\}$. Suppose not and let
  $x\in \partial G_{i-1}\setminus \partial G_i$. Since
  $x\in \partial G_{i-1}$, there exists a non-maximal cube $C$
  containing $x$ and contained as a facet in a unique cube $C'$ of
  $X_{i-1}=X_{cube}(G_{i-1})$.  On the other hand, since
  $X_{i-1}\subset X_i$ and $x$ is not a boundary vertex of $X_i$, the cube $C$ is a facet of yet another cube $C''$ in $X_i$.  The cube
  $C''$ of $X_i$ is not present in $X_{i-1}$, and consequently $v_i$
  is a vertex of $C''$. Since all cubes of $G_i$ containing $v_i$ are
  included in $C_i$, we know that $x$ is a vertex of $C_i$. From
  Lemma~\ref{boundary_corner} and since $x \notin \partial G_i$,
  necessarily $x = u_i$. This establishes the inclusion
  $\partial G_{i-1} \subseteq \partial G_{i} \cup \{u_i\}$.

  We now prove that $\partial G_{i} \subseteq S(G_i)$ by decreasing
  induction on $i=n,\ldots,1$. For $i=n$, we have
  $S(G_n)=\partial G_n$. Suppose now that the assertion holds for
  $G_i$ and consider the graph $G_{i-1}$. Since $v_i \notin G_i$, by
  the first assertion of the lemma, and by induction hypothesis, we
  have
  $\partial G_{i-1} \subseteq \partial G_i \setminus \{v_i\} \cup
  \{u_i\} \subseteq S(G_i) \setminus \{v_i\} \cup \{u_i\} =
  S(G_{i-1})$.
\end{proof}

\subsection{Reconstruction via corner peeling}
To reconstruct  a median graph $G$ from the pairwise
distances between the vertices of the boundary $\partial G$ we proceed
in the following way.
We first describe the objects that the reconstructor keeps track of
during the reconstruction algorithm. It first picks an arbitrary
vertex $z \in \partial G$ as a basepoint that is fixed during the
whole execution of the algorithm.  During the execution of the
algorithm, the reconstructor knows a set $S$ of vertices that is
initially $\partial G$ as well as the distance matrix $D$ between the
vertices of $S$ computed in the graph $G$ (if $u,v \in S$, then
$D[u,v]$ contains the distance $d_G(u,v)$).  It constructs a graph
$\Gamma$ that is initially isomorphic to the subgraph of $G$ induced
by the boundary $\partial G$ and will ultimately coincide with $G$.

In order to analyze the algorithm, we consider the intermediate values
$S_i$ of the set $S$, $D_i$ of the distance matrix $D$, and $\Gamma_i$
of the graph $\Gamma$ at the beginning of the $i$th step of the
algorihm. In order to have similar notations as in the definition of
monotone corner peelings, we denote the initial values of $S$, $D$,
and $\Gamma$ respectively by $S_n$, $D_n$, and $\Gamma_n$ (even if the
algorithm does not know $n$), and at each step, we decrease the values
of $i$.  For the analysis of the algorithm, we also consider the graphs
$G_i$ (unknown to the algorithm), where $G_n = G$.

We now give an outline of the reconstruction algorithm.  The goal is
to reconstruct the graph $G = G_n$ from the set $S_n=\partial G$ and
its distance matrix $D_n$.  The graph $\Gamma_n$ induced by
$\partial G$ can be computed from $D_n$: two vertices $u$ and $v$ of
$S_n$ are adjacent in $\Gamma_n$ if and only if $D_n[u,v] = 1$.  At
step $i$, the reconstructor picks a vertex $v_i$ of $S_i$ furthest
from the basepoint $z$, identifies the vertex $u_i$ opposite to $v_i$
in the unique cube $C_i$ of $G$ containing $v_i$ and its neighbors in
$S_i$, removes $v_i$ from $S_i$, and adds $u_i$ to $S_i$ unless it is
already in $S_i$. The resulting set is denoted by $S_{i-1}$. From
$D_i$, we compute the distance matrix $D_{i-1}$ between the vertices
of $S_{i-1}$; the main point is to compute the distances from $u_i$ to
the vertices of $S_{i-1}$ since the other distances are already known.
If $u_i \in S_i$, then $\Gamma_{i-1}$ is $\Gamma_{i}$. Otherwise,
$\Gamma_{i-1}$ is obtained by adding to $\Gamma_i$ the vertex $u_i$
and the edges between $u_i$ and its neighbors in
$S_{i-1}\cup \{v_i\}$.
The algorithm ends when $S_i$ becomes empty.  We will show that the
graph $\Gamma_0$ obtained when $S_i = \varnothing$ is isomorphic to
$G$.

Now, we present the invariants that are maintained at each step $i$ of the
reconstruction algorithm.
Let $G_i$ be the subgraph of $G$ obtained from $G$ by removing the
vertices $v_n,\ldots, v_{ i+1}$. Notice that the subgraph $G_i$ is not
known to the reconstructor. Furthermore, suppose that the removed
vertices $v_n,\ldots, v_{i+1}$ and the possibly added vertices
$u_n,\ldots, u_{i+1}$ satisfy the following inductive properties:

\begin{enumerate}
\item\label{inv-1} $d(z,v_n)\ge \cdots \ge d(z,v_{i+1})\ge d(z,v)$ for any vertex of $v$
of $G_i$.
\item\label{inv-2}  Each vertex $v_j$ with $n\ge j\ge i+1$ is a corner of
the graph $G_j$. \item\label{inv-3} For each $n\ge j\ge i+1$, either all neighbors of
  $v_j$ in $G_j$ are in $S_j$ or $u_j$ is the unique neighbor of
  $v_j$ in $G_j$; in both cases, $u_j \in S_{j-1}$.
\item\label{inv-4}  $S_i$ coincides with the extended boundary
$S(G_i)$ of $G_i$, $D_i$ is the distance matrix of $S(G_i)$ in $G$,
and $\Gamma_i = G[\bigcup_{n \ge j \ge i} S_j]$.

\end{enumerate}

The next result explains how to find a corner of the graph $G_i$
(without knowing $G_i$) from the distances from $z$ to the vertices of
$S_i=S(G_i)$.

\begin{lemma}\label{furthest-corner}
  Let $v_i$ be a vertex of $S_i$ maximizing $d(z,v_i)$.
Then $d(z,v_i)\ge d(z,v)$ for any vertex $v$ of $G_i$ and thus $v_i$
  is a corner of $G_i$.
\end{lemma}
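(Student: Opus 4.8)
The plan is to reduce the statement to the following core claim: \emph{every vertex $w$ of $G_i$ realizing $\max_{v\in V(G_i)}d(z,v)$ is a corner of $G_i$ and lies in $\partial G_i$.} Granting this, the lemma follows quickly. By invariant~\ref{inv-4} we have $S_i=S(G_i)$, and a straightforward induction (each added vertex $u_j=\overline{v_j}$ lies in the cube $C_j\subseteq G_j$ and differs from $v_j$) shows $S(G_i)\subseteq V(G_i)$; together with $\partial G_i\subseteq S(G_i)$ (Lemma~\ref{boundary_corner2}) this gives $\partial G_i\subseteq S_i\subseteq V(G_i)$. Since a furthest vertex $w$ of $G_i$ lies in $\partial G_i\subseteq S_i$, the maximum of $d(z,\cdot)$ over $S_i$ equals its maximum over $V(G_i)$; hence the vertex $v_i\in S_i$ maximizing $d(z,v_i)$ satisfies $d(z,v_i)=d(z,w)\ge d(z,v)$ for every $v\in V(G_i)$, which is the first assertion. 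As $v_i$ then also realizes the maximum over $V(G_i)$, the core claim applied to $w:=v_i$ shows that $v_i$ is a corner of $G_i$.

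To prove the core claim, I would fix a furthest vertex $w$ of $G_i$ and work inside the median graph $G$, transferring conclusions to $G_i$ by means of invariant~\ref{inv-1}. First I would identify the neighbors of $w$ in $G_i$: since $G$ is bipartite and $d(z,w)$ is maximum over $G_i$, every neighbor of $w$ in $G_i$ lies in $I(z,w)$, i.e.\ belongs to $\Lambda(w)$; conversely, each vertex of $\Lambda(w)$ is at distance $d(z,w)-1$ from $z$, whereas invariant~\ref{inv-1} forces every deleted vertex $v_{i+1},\dots,v_n$ to satisfy $d(z,v_j)\ge d(z,w)$, so no vertex of $\Lambda(w)$ is deleted. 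Thus the set of neighbors of $w$ in $G_i$ is exactly $\Lambda(w)$. By the Downward Cube Property (Lemma~\ref{descendent_cube}), $w$ together with $\Lambda(w)$ spans a unique cube $C(w)$ of $G$, whose vertex $\overline{w}$ opposite to $w$ is the gate of $z$; consequently every vertex of $C(w)$ other than $w$ is strictly closer to $z$ than $w$, and a second application of invariant~\ref{inv-1} shows that all of $C(w)$ lies in $G_i$.

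It then remains to read off that $w$ is a corner lying on the boundary. Since every neighbor of $w$ in $G_i$ lies in $C(w)$, and $C(w)$ is the unique cube of $G$ containing $w$ and $\Lambda(w)$, it is also the unique cube of $G_i$ containing $w$ and all its $G_i$-neighbors; by definition $w$ is a corner of $G_i$. Finally, exactly as in the proof of Lemma~\ref{boundary_corner}, any facet of $C(w)$ through $w$ is properly contained in no other cube of $G_i$ — a cube of $G_i$ through $w$ uses only edges to $\Lambda(w)\subseteq C(w)$ and hence cannot exceed $C(w)$ — so such a facet is a facet of the unique cube $C(w)$, whence $w\in\partial G_i$.

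The main obstacle is precisely that $G_i$ is not a median graph, so none of the median-graph tools (Downward Cube Property, gatedness of cubes) may be applied to $G_i$ directly; the whole argument hinges on carrying them out in $G$ and then using the monotonicity invariant~\ref{inv-1} to certify that the cube $C(w)$ and all downward neighbors of $w$ survive in $G_i$. The only situation needing separate mention is the degenerate one $w=z$, i.e.\ $V(G_i)=\{z\}$, where the claim is immediate.
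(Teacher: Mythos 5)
Your proof is correct and follows essentially the same route as the paper's: both arguments come down to showing that a vertex of $G_i$ furthest from $z$ is a corner of $G_i$ lying in $\partial G_i\subseteq S(G_i)=S_i$, using the Downward Cube Property in the ambient median graph $G$ together with the monotonicity invariant~(\ref{inv-1}). The only difference is presentational: the paper phrases this as a proof by contradiction and cites Proposition~\ref{corner-peeling-median} and Lemmas~\ref{boundary_corner} and~\ref{boundary_corner2}, whereas you inline those arguments directly.
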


\begin{proof} Suppose by way of contradiction that there exists a
  vertex $u$ of $G_i$ such that $d(z,v_i)<d(z,u)$ and assume without
  loss of generality that $u$ maximizes $d(z,u)$ among all vertices in
  $G_i$.  Since $d(z,v_n)\ge \cdots \ge d(z,v_{i+1})\ge d(z,v)$ for
  any vertex of $v$ of $G_i$ by invariant (\ref{inv-1}), from
  Proposition~\ref{corner-peeling-median} we conclude that there
  exists a monotone corner peeling of $G$ starting with the vertices
  $v_n,\ldots, v_{i+1},u$.  Consequently, $u$ is a corner of $G_i$ and
  therefore $u$ belongs to $\partial G_i$. Since
  $\partial G_i\subseteq S(G_i)$ by Lemma~\ref{boundary_corner2} and
  $S(G_i)=S_i$ by invariant (\ref{inv-4}), the vertex $u$ belongs to
  $S_i$, contradicting the definition of $v_i$.  Consequently, $v_i$
  is a vertex of $G_i$ maximizing $d(z,v_i)$ and hence a corner of
  $G_i$.
\end{proof}

Suppose that we have performed the steps $n, \ldots, i+1$ of the
algorithm. Let $v_i$ be a vertex of $S_i=S(G_i)$ maximizing $d(z,v_i)$,
and assume that $v_i \neq z$.
Since $v_i$ is a corner of $G_i$ by Lemma~\ref{furthest-corner}, $v_i$
and its neighbors in $G_i$ belong to a unique cube $C_i$ of
$G_i$. The dimension of $C_i$ is the number of neighbors
of $v_i$ in $G_i$.  Let $u_i$ be the vertex of $C_i$ opposite to
$v_i$. By Lemmas~\ref{boundary_corner} and~\ref{boundary_corner2} and
invariant (\ref{inv-4}),
$V(C_i)\setminus \{ u_i\}\subseteq \partial G_i \subseteq S(G_i) =
S_i$. If $u_i \notin S_i$, then $u_i$ will be added
to $S_{i-1}$.

Additionally, the distances from $u_i$ to all other vertices $x$ of
$S_i\setminus \{v_i\}$ should be computed.  If $\dim(C_i)=1$, then
$u_i$ is an articulation point separating $v_i$ from all other
vertices of $G_i$, and thus $d(u_i,x) = d(v_i,x) -1$.  Now suppose
that $\dim(C_i) \geq 2$ and let $N(u_i)$ be the set of neighbors of
$u_i$ in $C_i$. Since $C_i$ is gated in $G$, we can consider the gate
$x'$ of $x$ in $C_i$. We assert that $x' = u_i$ if and only if all
vertices of $N(u_i)$ are at the same distance from $x$. Indeed, if
$x' = u_i$, then all vertices of $N(u_i)$ are at distance $d(u_i,x)+1$
from $x$. Conversely, if all vertices of $N(u_i)$ are at the same
distance from $x$, then either $x' = u_i$ or $x'=v_i$. However, the
second case is impossible because $v_i$ is a corner of $G_i$ and
$x \in S_i\setminus \{v_i\} \subseteq V(G_i) \setminus \{v_i\}$.
Suppose now that $x' \neq u_i$ (for an illustration, see
Figure~\ref{fig-algo}). By the previous assertion, there exist
vertices in $N(u_i)$ that are at different distances from $x$. Since
the vertices of $N(u_i)$ are pairwise at distance 2, we have
$d(x,u_i) = \min \{d(x,u) : u \in N(u_i)\} +1 = \max \{d(x,u) : u \in
N(u_i)\} -1$. Consequently, independently of the dimension of $C_i$
and the position of $x'$ in $C_i$, the distance $d(x,u_i)$ can be
computed via the following formula:

\begin{lemma}\label{lem-distui}
  For any $x \in S_{i-1}$,
  $d(x,u_i) = \max \{d(x,u) : u \in N(u_i)\} -1$.
\end{lemma}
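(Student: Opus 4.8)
The plan is to exploit that the cube $C_i$ is gated in $G$ (Lemma~\ref{convex-gated}) and to read off every distance to $C_i$ from a single gate. Fix $x\in S_{i-1}$; since $v_i$ was removed we have $x\in V(G_i)\setminus\{v_i\}$. Let $x'$ be the gate of $x$ in $C_i$, so that $d(x,w)=d(x,x')+d_{C_i}(x',w)$ for every vertex $w$ of $C_i$. I would label the vertices of $C_i$ by $\{0,1\}^k$ (where $k=\dim(C_i)$) so that $u_i$ receives the all-zero label $\mathbf{0}$ and the opposite vertex $v_i$ receives the all-one label $\mathbf{1}$; then $d_{C_i}$ is Hamming distance on labels and $N(u_i)=\{e_1,\dots,e_k\}$ is exactly the set of standard basis vectors.

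The elementary computation comes first. Writing $s\in\{0,1\}^k$ for the label of $x'$ and $m:=d(x,x')$, the gate identity gives $d(x,u_i)=m+|s|$ and, for each $j$, $d(x,e_j)=m+|s|+1-2s_j=d(x,u_i)+1-2s_j$. Thus $d(x,e_j)=d(x,u_i)+1$ whenever $s_j=0$ and $d(x,e_j)=d(x,u_i)-1$ whenever $s_j=1$. Consequently $\max\{d(x,u):u\in N(u_i)\}$ equals $d(x,u_i)+1$ precisely when $s$ has at least one zero coordinate, that is, precisely when $x'\neq v_i$, and in that case rearranging yields the claimed formula.

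The main (and only nontrivial) obstacle is therefore to rule out $x'=v_i$, and this is where the hypotheses on $v_i$ enter. I would argue by contradiction: if $x'=v_i$, then every neighbor $u$ of $v_i$ inside $C_i$ satisfies $d(x,u)=d(x,v_i)+1>d(x,v_i)$. But $v_i$ is a corner of $G_i$ by Lemma~\ref{furthest-corner}, so all neighbors of $v_i$ in $G_i$ already lie in $C_i$; hence $v_i$ would be strictly closer to $x$ than each of its neighbors in $G_i$. Since $x\neq v_i$, this is impossible, because the vertex preceding $v_i$ on any shortest $(x,v_i)$--path is a neighbor of $v_i$ at distance $d(x,v_i)-1$. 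Therefore $x'\neq v_i$, which closes the argument.

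Finally, this uniform treatment automatically covers the degenerate case $\dim(C_i)=1$: there $C_i$ is the single edge $u_iv_i$, $N(u_i)=\{v_i\}$, the gate of $x$ is forced to be $u_i$, and the formula collapses to $d(x,u_i)=d(x,v_i)-1$, matching the articulation-point relation observed just before the statement.
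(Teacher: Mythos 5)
Your proof is correct and follows essentially the same route as the paper: both use gatedness of $C_i$ to reduce the claim to a Hamming-distance computation inside the cube and then rule out the gate being $v_i$ via the fact that $v_i$ is a corner of $G_i$ (for which one should take a shortest $(x,v_i)$-path inside $G_i$ rather than ``any'' such path in $G$ --- legitimate because level-graphs of a corner peeling are isometric subgraphs of $G$). The only cosmetic difference is that you treat the case $\dim(C_i)=1$ uniformly, whereas the paper disposes of it separately via the articulation-point observation.
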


\begin{figure}[h]
\begin{center}
\includegraphics{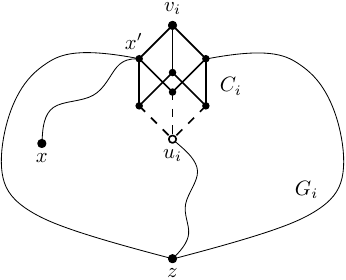}
\end{center}
\caption{Illustration of step $i$ of the reconstruction algorithm}\label{fig-algo}
\end{figure}

We now describe step $i$ of the algorithm. First we define the set
$S_{i-1}$.  The reconstructor picks a vertex $v_i$ in $S_i$ maximizing
$d(z,v_i)$. If $z =v_i$, then the algorithm stops and return
$\Gamma_i$. So, suppose that $v_i\neq z$.  The reconstructor computes
the set $L_i$ of neighbors of $v_i$ in $S_i$ using the distance matrix
$D_i$. If $|L_i| \geq 2$, then by iteratively applying the quadrangle
condition, it identifies all vertices of the cube $C_i$ except
possibly $u_i$. Let $N_i$ be the set of neighbors of $u_i$ in $C_i$.
The reconstructor can detect if the vertices of $N_i$ have a common
neighbor in $S_i$ closer to $z$. If yes, then this vertex is
necessarily $u_i$, otherwise $u_i$ does not belong to $S_i$. If
$|L_i| \leq 1$, then $\dim(C_i)=1$, and $u_i \in S_i$ if and only if
$|L_i| = 1$, in which case, $L_i = \{u_i\}$. In this case, let
$N_i = \{v_i\}$ (as in the previous case, $N_i$ is the set of
neighbors of $u_i$ in $C_i$).  If $u_i \in S_i$, then let
$S_{i-1} = S_i \setminus \{v_i\}$ and if $u_i \notin S_i$, then
$S_{i-1}$ is obtained by adding a new vertex $u_i$ to
$S_i \setminus \{v_i\}$.

The reconstructor then derives the distance matrix $D_{i-1}$ of
$S_{i-1}$ from $D_i$. It first removes the row and the column of $D_i$ corresponding to
$v_i$.
Additionally, if $u_i \notin S_i$, it adds a row and a column for
$u_i$ where
$D_{i-1}[u_i,x] = D_{i-1}[x,u_i]= \max \{D_i[u,x]: u \in N_i\} -1$,
for each $x \in S_i\setminus \{v_i\}$.

Finally, the graph $\Gamma_{i-1}$ is $\Gamma_i$ if $u_i \in
S_i$. Otherwise, $\Gamma_{i-1}$ is obtained from $\Gamma_i$ by adding
the vertex $u_i$, all edges in
$\{{u_i}w : w \in S_{i-1} \text{ and } d(u_i,w) = 1\}$.  If
$\dim(C_i) = 1$, then the edge ${u_i}{v_i}$ is also added to
$\Gamma_{i-1}$.
This ends the description of step $i$ of the algorithm: from the set
$S_{i}$, the distance matrix $D_{i}$, and the graph $\Gamma_{i}$, the
reconstructor has computed $S_{i-1}$, $D_{i-1}$, and $\Gamma_{i-1}$.

Now we prove that the invariants (\ref{inv-1}), (\ref{inv-2}),
(\ref{inv-3}), (\ref{inv-4}) are satisfied after step $i$.
Invariants (\ref{inv-1}) and (\ref{inv-2}) follow from Lemma~\ref{furthest-corner} and the definition of $v_i$. Invariant
(\ref{inv-3}) follows from the definition of $u_i$ and
Lemmas~\ref{boundary_corner}
and~\ref{boundary_corner2}. Since $S_i = S(G_i)$, and by the definitions of $v_i$ and $u_i$, we
have
$S_{i-1} = S_i \setminus \{v_i\} \cup \{u_i\} = S(G_i) \setminus
\{v_i\} \cup \{u_i\} = S(G_{i-1})$. By Lemma~\ref{lem-distui}, the
distances from $u_i$ to all vertices of $S_{i-1}$ have been correctly
computed at step $i$, and thus, by induction hypothesis, $D_{i-1}$ is
the distance matrix of $S_{i-1}=S(G_{i-1})$. If $u_i \in S_i$, then
$\Gamma_{i-1} = \Gamma_i = G[\bigcup_{n \ge j \ge i} S_j] =
G[\bigcup_{n \ge j \ge i-1} S_j]$.  If $u_i \notin S_i$, then
$V(\Gamma_{i-1}) = V(\Gamma_i) \cup \{u_i\} = \bigcup_{n \geq j \geq
  i-1} S_j$. Moreover, for each neighbor $w \in S_{i-1} \cup \{v_i\}$
of $u_i$ in $G$, the edge $wu_i$ is in $E(\Gamma_{i-1})$. If $u_i$ has
another neighbor $w$ in $G$ belonging to $V(\Gamma_{i-1})$, then
$w = v_j$ with $j > i$. However, since $u_i \notin S_j$, this implies
by invariant (\ref{inv-3}) that $u_i \in S_{j-1}$ and thus
$u_i\in S_i$, a contradiction. Therefore, $\Gamma_{i-1}$ is the
subgraph of $G$ induced by $\bigcup_{n \ge j \ge i-1} S_j$. This
establishes invariant (\ref{inv-4}).

\begin{lemma}\label{Gamma0}
  The graph $\Gamma_0$ returned by the reconstructor is isomorphic to $G$. \end{lemma}

\begin{proof}
  By invariant (\ref{inv-1}) of the algorithm, $z$ is the last vertex removed
  from $S$.  By Lemma~\ref{furthest-corner}, when $z$ is considered by
  the algorithm, all vertices of $G$ have been already processed. This
  implies that each vertex $x \in V(G)$ belongs to some $S_i$ and
  thus to $V(\Gamma_0)$, establishing $V(\Gamma_0) = V(G)$. By
  invariant (\ref{inv-4}), $\Gamma_0$ is an induced subgraph of $G$ and is thus
  isomorphic to $G$.
\end{proof}

Lemma~\ref{Gamma0} and Theorem~\ref{CAT(0)=median1} imply the main result of the paper:

\thmCATBoundaryRigid*

\begin{remark}
  The key property of our algorithm is that the vertex $v_i$ chosen at
  step $i$ is a corner of the graph $G_i$, as established in
  Lemma~\ref{furthest-corner}. Note that this lemma holds for any set
  of vertices containing $\partial G_i$, in particular for any set
  containing $S_i$.

  Suppose that instead of starting with $S_n = \partial G$, we start
  the algorithm with a set of vertices $S'_n$ containing $S_n$, the
  distance matrix $D'_n$ of $S'_n$ in $G$, and the subgraph
  $\Gamma'_n$ of $G$ induced by $S'_n$.  Then executing precisely the
  same algorithm and considering the intermediate values $S'_i$, $D'_i$,
  and $\Gamma'_i$ of $S$, $D$, and $\Gamma$,  one can show by
  induction on $i$ that $S_i \subseteq S'_i$ holds for any step $i$.
  Thus our algorithm reconstructs $G$ from $S'_n$ and $D'_n$.

  In particular, since our definition of combinatorial boundary is
  weaker than the one of Haslegrave et al.~\cite{HaScTaTa}, our
  algorithm also reconstruct $G(X)$ starting from the combinatorial boundary 
  of $X$ defined as in~\cite{HaScTaTa}.
\end{remark}

\section{Final remarks}

In this note, we proved that finite CAT(0) cube complexes are boundary
rigid.  This generalizes the results of Haslegrave, Scott, Tamitegama,
and Tan~\cite{HaScTaTa} and settles their main Conjecture 20.  Our
method uses the fact that any ordering of vertices of the 1-skeleton
$G(X)$ of a CAT(0) cube complex $X$ by decreasing the distances to a
basepoint $z$ is a corner peeling.

In our approach, it was important to assume that $G(X)$ is a median
graph, i.e., that $X$ is a CAT(0) cube complex.  For example, consider
the square complex consisting of three squares with a common vertex
$v_0$ and pairwise intersecting in edges incident to this vertex. The
1-skeleton of this complex is the graph $Q^-_3$ (the 3-cube minus one
vertex).  This graph is the 6-cycle $C=(v_1,v_2,v_3,v_4,v_5,v_6)$ and
the vertex $v_0$ incident to three vertices of $C$. There are two ways
to connect $v_0$ to $C$, either with $v_1,v_3,v_5$ or with
$v_2,v_4,v_6$, leading to two isomorphic square complexes $X'$ and
$X''$ with respective $1$-skeletons $G'$ and $G''$.  The graph $G'$
has $v_2,v_4,v_6$ as corners and the graph $G''$ has $v_1,v_2,v_3$ as
corners. Both $G'$ and $G''$ have the cycle $C$ as the boundary, 
furthermore there is an isometry between the two boundaries $\partial G'$ 
and $\partial G''$. However this isometry cannot be extended to an isomorphism 
between the graphs $G'$ and $G''$ and between the complexes $X'$ and $X''$.  
Therefore, any class of finite cube complexes containing the square complex 
$X'\backsimeq X''$ as a member  
is not boundary rigid. This example also shows that  without any additional information we
cannot decide which vertices of $C$ are corners of $G$.
The graph $Q_3^-$ admits a monotone corner peeling if we order the
vertices of $Q_3^-$ by distance to a vertex $z$ of degree
$3$. Consequently, the existence of monotone corner peelings is not
sufficient to ensure boundary rigidity if we are not able to identify
the corners.
By Proposition~\ref{corner-peeling-median}, in median graphs, any
ordering of the vertices by distance to an arbitrary vertex $z$ gives
a corner peeling. Thus, we can choose an arbitrary vertex on the
boundary as a basepoint.

In order to prove that our algorithm effectively
reconstructs the $1$-skeleton $G$ of a cube complex $X$ from its
boundary, it is sufficient to establish the following properties:
\begin{enumerate}[label=(\alph*)]
\item the cubes of $G$ are gated;
\item one can find a basepoint $z\in \partial G$ such that any ordering of the
  vertices of $G$ by distance to $z$ gives a corner peeling.
\end{enumerate}

Condition (a) holds for all partial cubes, i.e., graphs which can be
isometrically embedded into hypercubes. In~\cite{ChChMoWa}, Chalopin,
Chepoi, Moran, and Warmuth proved that every conditional antimatroid
$G$ (conditional antimatroids form a superclass of median graphs and
convex geometries and a subclass of partial cubes) admits a monotone
corner peeling with respect to a specific basepoint $z$, corresponding
to the empty set in the set-theoretical encoding of $G$. However,
conditional antimatroids do not always satisfy condition (b). Indeed,
$Q_3^-$ is a conditional antimatroid, and thus the class of
conditional antimatroids is not boundary rigid.  As explained above,
even if there exists a monotone corner peeling of $Q_3^-$ with respect
to a basepoint on $\partial Q_3^-$, one cannot identify it by knowing
only the boundary of $Q_3^-$. Moreover, there also exist conditional
antimatroids where the specific basepoint $z$ does not belong to
$\partial G$.  However, if, in addition to the distance matrix of
$\partial G$, the specific basepoint $z$ of a conditional antimatroid
$G$ is given as well as the distances from $z$ to all vertices of
$\partial G$, then, proceeding as in the case of median graphs, we can
reconstruct $X_{cube}(G)$.

Haslegrave~\cite{Ha} presented a plane CAT(0) triangle-square complex,
that cannot be reconstructed from its boundary distance matrix.
Haslegrave et al.~\cite{HaScTaTa} asked for conditions under which a
class of finite simplicial complexes is boundary rigid. It will be
interesting to investigate this question for clique complexes of
bridged and Helly graphs, which are well-studied in metric graph
theory~\cite{BaCh_survey, CCHO, Ch_CAT}.  Groups acting on them have
recently been extensively studied in geometric group theory. These
simplicial complexes are not CAT(0) but are considered to have
combinatorial nonpositive curvature; see the papers by Januszkiewicz
and \'Swi{\k{a}}tkowski~\cite{JaSw} and Chalopin,
Chepoi, Hirai, and Osajda~\cite{CCHO}.

\subsection*{Acknowledgements} We are grateful to the referees for a careful reading of the first version and numerous useful comments. 
J.C. was partially supported by ANR project DUCAT (ANR-20-CE48-0006).

\end{document}